  \pgfplotsset{compat = 1.13}
  \tikzset{external/system call = {%
    pdflatex \tikzexternalcheckshellescape
      -halt-on-error
      -interaction=batchmode
      -jobname "\image" "\texsource"}}
\newcommand{%
  \tikzexternalenable%
  \tikzsetnextfilename{}%
  \input{graphics/.tikz}%
  \tikzexternaldisable%
}[1]{%
  \tikzexternalenable%
  \tikzsetnextfilename{#1}%
  \input{graphics/#1.tikz}%
  \tikzexternaldisable%
}
\Crefname{algocf}{Algorithm}{Algorithms}
\theoremstyle{plain}\newtheorem{theorem}{Theorem}
\theoremstyle{plain}\newtheorem{lemma}{Lemma}
\theoremstyle{plain}\newtheorem{proposition}{Proposition}
\theoremstyle{plain}\newtheorem{corollary}{Corollary}
\theoremstyle{definition}
\renewcommand{\rm}[1]{\ensuremath{\mathrm{#1}}}
\renewcommand{\sf}[1]{\ensuremath{\mathsf{#1}}}
\DeclareMathOperator{\rank}{rank}
\DeclareMathOperator{\mspan}{span}
\newcommand{\trans}{\ensuremath{\mkern-1.5mu\sf{T}}}
\newcommand{\R}{\ensuremath{\mathbb{R}}}
\newcommand{\N}{\ensuremath{\mathbb{N}}}
\newcommand{\nh}{\ensuremath{N}}
\newcommand{\nr}{\ensuremath{n}}
\newcommand{\nmin}{\ensuremath{n_{\min}}}
\newcommand{\nrr}{\ensuremath{r}}
\newcommand{\np}{\ensuremath{p}}
\newcommand{\nq}{\ensuremath{q}}
\newcommand{\dT}{\ensuremath{T}}
\newcommand{\nc}{\ensuremath{\nh_{\rm{c}}}}
\newcommand{\nx}{\ensuremath{\nh_{\rm{x}}}}
\newcommand{\Xcal}{\ensuremath{\mathcal{X}}}
\newcommand{\Vcal}{\ensuremath{\mathcal{V}}}
\newcommand{\Vcalt}{\widetilde{\Vcal}}
\newcommand{\Ar}{\ensuremath{\widehat{A}}}
\newcommand{\Br}{\ensuremath{\widehat{B}}}
\newcommand{\Kr}{\ensuremath{\widehat{K}}}
\newcommand{\Xr}{\ensuremath{\widehat{X}}}
\newcommand{\Sigmar}{\ensuremath{\widehat{\Sigma}}}
\newcommand{\xr}{\ensuremath{\hat{x}}}
\newcommand{\At}{\ensuremath{\widetilde{A}}}
\newcommand{\Bt}{\ensuremath{\widetilde{B}}}
\newcommand{\Kt}{\ensuremath{\widetilde{K}}}
\newcommand{\St}{\ensuremath{\widetilde{S}}}
\newcommand{\Vt}{\ensuremath{\widetilde{V}}}
\newcommand{\vt}{\ensuremath{\tilde{v}}}
\newcommand{\Xt}{\ensuremath{\widetilde{X}}}
\newcommand{\xt}{\ensuremath{\tilde{x}}}
\newcommand{\datatrip}{(U_{-}, X_{-}, X_{+})}
\newcommand{\datatripr}{(U_{-}, \Xr_{-}, \Xr_{+})}
\newcommand{\datatript}{(U_{-}, \Xt_{-}, \Xt_{+})}
\definecolor{matlabblue}{HTML}{0072BD}
\definecolor{matlaborange}{HTML}{D95319}
\definecolor{matlabyellow}{HTML}{EDB120}
\definecolor{matlabpurple}{HTML}{7E2F8E}
\definecolor{matlabgreen}{HTML}{77AC30}
\definecolor{matlablightblue}{HTML}{4DBEEE}
\definecolor{matlabred}{HTML}{A2142F}
\tikzstyle{sline} = [
\tikzstyle{dline} = [
\newcommand{\plotfontsize}{\footnotesize}
\begin{document}


\title{On the sample complexity of stabilizing linear dynamical systems
  from data}

\author[$\ast$,1]{Steffen W. R. Werner}
\affil[$\ast$]{Courant Institute of Mathematical Sciences, New York
  University, New York, NY 10012, USA}
\affil[1]{\email{steffen.werner@nyu.edu}, \orcid{0000-0003-1667-4862}}
  
\author[$\ast$,2]{Benjamin Peherstorfer}
\affil[2]{\email{pehersto@cims.nyu.edu}, \orcid{0000-0002-1558-6775}}
  
\shorttitle{Sample complexity of stabilization from data}
\shortauthor{S. W. R. Werner, B. Peherstorfer}
\shortdate{2022-07-22}
\shortinstitute{}
  
\keywords{%
  model reduction,
  dynamical systems,
  numerical linear algebra,
  data-driven control,
  data-driven modeling,
  scientific machine learning
}

\msc{%
  65F55,
  65P99,
  93B52,
  93C57,
  93D15
}
  
\abstract{%
  Learning controllers from data for stabilizing dynamical systems typically
  follows a two step process of first identifying a model and then constructing
  a controller based on the identified model.
  However, learning models means identifying generic descriptions of the
  dynamics of systems, which can require large amounts of data and extracting
  information that are unnecessary for the specific task of
  stabilization.
  The contribution of this work is to show that if a linear dynamical system has
  dimension (McMillan degree) $\nr$, then there always exist $\nr$ states from
  which a stabilizing feedback controller can be constructed, 
  independent of the dimension of the representation of the observed states and
  the number of inputs.
  By building on previous work, this finding implies that any linear dynamical
  system can be
  stabilized from fewer observed states than the minimal number of states
  required for
  learning a model of the dynamics.
  The theoretical findings are demonstrated with numerical experiments that
  show the stabilization of the flow behind a cylinder from less
  data than necessary for learning a model.
}

\novelty{}

\maketitle


\section{Introduction}%
\label{sec:intro}

Learning feedback controllers from data for stabilizing dynamical systems
typically follows a two step process:
First, a model of the underlying system of interest is identified from data.
Then, a controller is constructed based on the identified model~\cite{BruK19}.
However, learning models means identifying generic descriptions of the dynamics
of systems, which can require large amounts of data and can include extracting
information about the systems that are unnecessary for the specific task of
finding stabilizing controllers.
Additionally, if data are received in form of observed state trajectories, then
they can come in non-minimal representations in spaces of higher dimensions than
the minimal dimension of the space in which the dynamics of the systems evolve;
cf.~McMillan degree~\cite[Sec.~4.2.2]{Ant05}.
The non-minimal representation of the observed states means that higher
dimensional models are learned than necessary for describing the system, which
in turn requires even larger numbers of samples and higher training costs. 

This work focuses on the design of low-dimensional state-feedback controllers
for linear dynamical systems.
The main finding is that even if states of a system are observed in a
high-dimensional representation, the required number of states to learn a
stabilizing controller scales with the intrinsic, minimal dimension of the
system rather than the dimension of the representation of the states:
If a system has dimension $\nr$, then there exist $\nr$ states from which a
stabilizing feedback controller can be constructed (\Cref{cor:numsamp}
on page~\pageref{cor:numsamp}).
If instead only $\nr - 1$ or fewer states are observed, then there cannot exist
a feedback controller that stabilizes all systems from which the sampled states
can be observed.
This finding shows that stabilization via state-feedback can be achieved from
fewer observed states than the minimal number of states required for identifying
models, which is a consequence of~\cite{VanETetal20} and means that the
stabilization of any linear dynamical system, for which a stabilizing controller
exists, is possible with less data than learning a model.

The task of data-driven controller design roots back to~\cite{ZieN42}, which led
to model-free controller design in which controllers are learned via the
parametrization of suitable control laws that are tuned via optimization against
given data; see, for example,~\cite{CamLS02, FliJ13, LeqGMetal03, SafT95}.
The development of model reduction techniques~\cite{Ant05, BenGW15, BenSGetal21,
BenSGetal21a, QuaR14} made model-based control tractable, which allows the
application of more complex control laws than in controller tuning.
In particular, model reduction also motivated the two step approach of first
system identification and subsequent controller design because reduced models
are of lower dimensions and thus cheaper to identify; even though it is far from
guaranteed that controllers based on identified reduced models stabilize the
original system.
A large body of work has been established for learning (reduced)
dynamical-system models from data such as dynamic mode decomposition and
operator inference~\cite{Sch10, TuRLetal14, PehW16, Peh20}, sparse
identification methods~\cite{BruPK16, SchTW18, SchCHetal13}, and the Loewner
framework~\cite{BeaG12, MayA07, PehGW17, SchUBetal18, SchU16}. 
All of these methods are aiming to identify general models rather than learning
models specifically for the purpose of controller design.
In~\cite{KaiKB18, KaiKB21}, the authors take into account the task of control
when learning models and focus on nonlinear systems.
However, no sample complexity results are provided.
In~\cite{BruBPetal16}, the authors select data such that they are informative
for control with models learned via the Koopman operator, and in the
work~\cite{KraPW17} models are constructed adaptively from data for controlling
systems with quickly changing dynamics.
The authors of~\cite{DeaMMetal20, TuBPetal17} balance model approximation error
and control but aim to identify models of the same dimension as the observed
states, rather than learning low-dimensional controllers as in the present work.

The construction of low-dimensional controllers has been studied extensively by
the model reduction community; see, e.g.,~\cite{BenCQetal00, BenHW21, BenLP08,
BreMS21, JonS83}.
Such classical techniques belong to the class of approaches consisting of two
steps of first, identifying a general (reduced) model of the system from data,
followed by the controller design.
The authors of~\cite{DrmMM18, GosGB21} show that several of the classical model
reduction methods such as balanced and modal truncation are applicable even if
only data are available; however, it remains unclear how many data samples are
necessary to learn the reduced models and subsequently construct stabilizing
controllers.

The idea of data-driven controller construction regained anew interest through
the influential work~\cite{WilRMetal05}.
It introduced the so-called fundamental lemma of linear systems that states that
all trajectories of a linear system can be obtained from any given trajectory
under the assumption that the input signal is persistently exciting.
This result can be applied to study system identification, but it also led to
new approaches and strategies for controller design such as the data-driven
construction of stabilizing state-feedback
controllers~\cite{BerKSetal20, VanETetal20, DePT20}.
This line of work serves as a building block for our contribution.
We build on~\cite{VanETetal20}, which shows that fewer data samples are
sufficient for stabilization than for identifying models in certain situations;
however, the work~\cite{VanETetal20} does not consider low-dimensional
representations and operates in spaces that have the same dimension as the
observed states.
In contrast, we show that the intrinsic, minimal dimension of a system
determines how many states need to be observed for stabilization, independent of
the dimension of the data.
Key to the analysis is a combination of arguments common in model
reduction~\cite{Ant05} with a
careful distinction between the stabilizability of systems versus the
stabilizability of models of systems.
The distinction between model and system is particularly important for
data-driven control because models learned from non-minimal representations of
data are not unique and thus can be unstabilizable, which makes stabilization
via the identified models intractable independent of whether the underlying
systems are stabilizable or not.

The manuscript is organized as follows:
Preliminaries and building blocks for this work are described in
\Cref{sec:basics}.
We will carefully distinguish between models of systems and the
systems themselves.
The main contribution is \Cref{sec:infercon} that shows that the number
of observed states required for stabilization scales with the dimension of the
system rather than the dimension of the data and the model.
The case of approximately low-dimensional systems is discussed, too.
In \Cref{sec:algorithms}, we provide computational algorithms.
Numerical examples in \Cref{sec:examples} demonstrate the theory and
conclusions are drawn in \Cref{sec:conclusions}.


\section{Preliminaries}%
\label{sec:basics}

This section reviews classical results about system identification for linear
dynamical systems and discusses the concept of data informativity that was
introduced in~\cite{GevBBetal09,VanETetal20}.


\subsection{Sampling data from dynamical systems}

We consider data triplets of the form $\datatrip$.
If the system from which data are sampled is discrete in time, then state-space
models have the form
\begin{align} \label{eqn:dtsys}
  x(t + 1) & = A x(t) + B u(t),\qquad t \in \N_{0},
\end{align}
with $A \in \R^{\nh \times \nh}$ and $B \in \R^{\nh \times \np}$, and the
matrices $X_-$ and $X_+$ are
\begin{align*}
  \begin{aligned}
    X_{-} & = \begin{bmatrix} x(0) & x(1) & \ldots & x(\dT-1) \end{bmatrix}
      \in \R^{\nh \times \dT} & \text{and}\\
    X_{+} & = \begin{bmatrix} x(1) & x(2) & \ldots & x(\dT) \end{bmatrix}
      \in \R^{\nh \times \dT},
  \end{aligned}
\end{align*}
where the columns are instances of the state $x(t) \in \R^{\nh}$ and
$\N_{0} = \{0\} \cup \N$.
The inputs used to generate $X_{-}$ and $X_{+}$ are the columns of the matrix
\begin{align*}
  U_{-} & = \begin{bmatrix} u(0) & u(1) & \ldots & u(\dT - 1) \end{bmatrix}
    \in \R^{\np \times \dT}.
\end{align*}
In the case of continuous-time systems, state-space models have the form
\begin{align} \label{eqn:ctsys}
  \dot{x}(t) & = A x(t) + B u(t),\qquad t \geq 0,
\end{align}
with $A \in \R^{\nh \times \nh}$ and $B \in \R^{\nh \times \np}$ and the states
are $x(t_{0}), x(t_{1}), \ldots, x(t_{\dT - 1}) \in \R^{\nh}$ at
times $0 = t_{0} < t_{1} < \ldots < t_{\dT - 1}$.
Then, the matrix $X_-$ is
\begin{align*}
  X_{-} & = \begin{bmatrix} x(t_{0}) & x(t_{1}) & \ldots & x(t_{\dT-1}) 
    \end{bmatrix} \in \R^{\nh \times \dT},
\end{align*}
with the corresponding time derivatives 
\begin{align*}
  X_{+} & = \begin{bmatrix} \dot{x}(t_0) & \dot{x}(t_{1}) & \ldots & 
    \dot{x}(t_{\dT-1}) \end{bmatrix} \in \R^{\nh \times \dT},
\end{align*}
and inputs
\begin{align*}
  U_- & = \begin{bmatrix} u(t_0) & u(t_1) & \ldots & u(t_{\dT - 1})
    \end{bmatrix} \in \R^{\np \times \dT}.
\end{align*}

The feasible initial conditions $x(0)$ are in a
subspace $\Xcal_{0} \subset \R^{\nh}$.
Note that the space $\Xcal_{0}$ of initial conditions influences the minimal
dimension of the space in which the dynamics of the system states evolve; we
will re-visit this in detail below.

In the following, the matrices $A$ and $B$ from
models~\cref{eqn:dtsys,eqn:ctsys} are unavailable and only trajectories can be
sampled from initial conditions and inputs.


\subsection{Control via system identification}%
\label{subsec:identfb}

The matrix $K \in \R^{\np \times \nh}$ is a stabilizing controller if the system
closed with the feedback input $u(t) = K x(t)$ is asymptotically stable; see,
e.g.,~\cite{Dat04, DraH99}.
Consequently, the system is called \emph{stabilizable} if such a state-feedback
matrix $K$ exists.

Stabilizability can also be described in terms of models as
follows: A discrete-time model~\cref{eqn:dtsys} is called
\emph{stabilizable} if there exists a feedback matrix $K$ such that the
eigenvalues of $A + B K$ are in the open unit disk.
A continuous-time model~\cref{eqn:ctsys} is called
\emph{stabilizable} if there exists a feedback matrix $K$ such that the
eigenvalues of $A + B K$ are in the open left half-plane.
A system is stabilizable if and only if there exists a model of the system that
is stabilizable.

One approach for deriving a controller $K$ from data is first identifying a
model from a data triplet $\datatrip$ and then applying classical
control approaches to construct a $K$ from the identified model.
However, identifying a model can be expensive in terms of
number of data samples $\dT$ that are required.
The following proposition states the necessary condition for identifying
state-space models and a constructive approach to do so.

\begin{proposition}[Identification of state-space model~\cite{VanD96}]%
  \label{prp:sysident}
  Let $\datatrip$ be a data triplet. 
  The underlying state-space model~\cref{eqn:dtsys} (or~\cref{eqn:ctsys}) can
  be uniquely identified from the data triplet as
  \begin{align*}
    \begin{aligned}
      A & = X_{+} V_{1}^{\dagger} & \text{and} &&
        B & = X_{+} V_{2}^{\dagger},
    \end{aligned}
  \end{align*}
  if and only if
  \begin{align}
    \rank\left( \begin{bmatrix} X_{-} \\ U_{-} \end{bmatrix} \right) &
      = \nh + \np,\label{eq:PropSysIdRankCondition}
  \end{align}
  where $\begin{bmatrix} V_{1}^{\dagger} & V_{2}^{\dagger} \end{bmatrix}$ is a
  right inverse in the sense of
  \begin{align*}
    \begin{bmatrix} X_{-} \\ U_{-} \end{bmatrix}
      \begin{bmatrix} V_{1}^{\dagger} & V_{2}^{\dagger} \end{bmatrix} & =
      \begin{bmatrix} I_{\nh} & 0 \\ 0 & I_{\np} \end{bmatrix}.
  \end{align*}
\end{proposition}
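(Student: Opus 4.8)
The plan is to collapse both the discrete-time and continuous-time cases into a single algebraic identity and then treat the identification as the unique solvability of a linear matrix equation. First I would observe that, column by column, the defining relations $x(t+1) = Ax(t) + Bu(t)$ and $\dot{x}(t) = Ax(t) + Bu(t)$ both reduce to the same stacked form
\begin{align*}
  X_{+} = \begin{bmatrix} A & B \end{bmatrix}
    \begin{bmatrix} X_{-} \\ U_{-} \end{bmatrix},
\end{align*}
so that recovering $(A,B)$ from the data triplet is exactly the problem of solving $X_{+} = M D$ for the block matrix $M = \begin{bmatrix} A & B \end{bmatrix}$, where $D = \begin{bmatrix} X_{-} \\ U_{-} \end{bmatrix} \in \R^{(\nh + \np) \times \dT}$. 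The whole statement then amounts to showing that this equation pins down $M$ uniquely if and only if $\rank(D) = \nh + \np$.

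For the direction from the rank condition to unique identification, I would use that $\rank(D) = \nh + \np$ means $D$ has full row rank, hence a right inverse $\begin{bmatrix} V_{1}^{\dagger} & V_{2}^{\dagger} \end{bmatrix}$ with $D \begin{bmatrix} V_{1}^{\dagger} & V_{2}^{\dagger} \end{bmatrix} = I_{\nh + \np}$ exists. Right-multiplying the stacked identity by this right inverse peels off the operator matrix, $X_{+} \begin{bmatrix} V_{1}^{\dagger} & V_{2}^{\dagger} \end{bmatrix} = \begin{bmatrix} A & B \end{bmatrix}$, and reading the two blocks separately yields $A = X_{+} V_{1}^{\dagger}$ and $B = X_{+} V_{2}^{\dagger}$, exactly as claimed. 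Uniqueness I would obtain from the observation that any two admissible matrices $M, M'$ satisfy $(M - M')D = 0$; full row rank forces a trivial left null space of $D$, so $M = M'$. I would also note in passing that the right inverse of $D$ need not be unique, so the formula ought to be checked to be independent of the choice — but this is immediate from the uniqueness of $M$ just established.

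For the converse I would argue by contrapositive: if $\rank(D) < \nh + \np$, then $D$ has a nonzero left null vector $w \in \R^{\nh + \np}$, that is $w^{\trans} D = 0$. Since the data do come from some $M = \begin{bmatrix} A & B \end{bmatrix}$, the rank-one perturbation $M + v w^{\trans}$ reproduces the same $X_{+}$ for every $v \in \R^{\nh}$, producing a continuum of distinct solutions and thereby destroying uniqueness. Hence the identification cannot be unique unless the rank condition holds.

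I expect this result to be essentially routine linear algebra once the stacked reformulation is in place, so I do not anticipate a genuine obstacle. The two points that deserve care are (i) verifying that the continuous-time case really produces the identical algebraic relation — here $X_{+}$ collects sampled derivatives rather than shifted states, but the column-wise linearity through $\begin{bmatrix} A & B \end{bmatrix}$ is the same — and (ii) in the converse direction, genuinely exhibiting a second solution via the left null vector rather than merely asserting non-uniqueness from rank deficiency.
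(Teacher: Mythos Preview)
The paper does not actually give a proof of this proposition; it is stated as a cited result from~\cite{VanD96} and used as a black box thereafter. There is therefore no ``paper's own proof'' to compare against.

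That said, your argument is correct and is exactly the standard one: rewrite the data relation as $X_{+} = \begin{bmatrix} A & B \end{bmatrix} D$ with $D = \begin{bmatrix} X_{-} \\ U_{-} \end{bmatrix}$, and reduce everything to whether the linear equation $X_{+} = M D$ determines $M$ uniquely. Full row rank of $D$ gives a right inverse and hence the claimed formulas, with uniqueness from the trivial left null space; rank deficiency gives a nonzero left null vector $w$ and hence a one-parameter family $M + v w^{\trans}$ of explaining models. Your two caveats (that the continuous-time case yields the same algebraic relation with $X_{+}$ storing derivatives, and that independence of the formula from the particular right inverse follows from uniqueness of $M$) are the right things to flag and are both handled correctly.
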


Note that the identified state-space model in
\Cref{prp:sysident} is independent of the right inverse
$\begin{bmatrix} V_{1}^{\dagger} & V_{2}^{\dagger} \end{bmatrix}$.
Once a model is found, classical methods for system stabilization
such as pole assignment~\cite{Dat04}, Bass' algorithm~\cite{Arm75, ArmR76},
Riccati equations~\cite{LanR95} and partial
stabilization~\cite{BenCQetal00} are applicable.
A consequence of \Cref{prp:sysident} is that at least
$\dT = \nh + \np$ data samples are needed to identify the model
from a data triplet $\datatrip$, otherwise the rank
condition~\cref{eq:PropSysIdRankCondition} cannot be satisfied.
In particular, the dimension $\nh$ of the states of the sampled trajectory
enters in the number of required data samples and the state dimension can be
high.
Also note that the necessary condition in \Cref{prp:sysident}
can only be satisfied if sufficiently many linearly independent states are
observed.
A sufficient condition to guarantee the existence of appropriate data
samples is controllability of the unknown model.


\subsection{Inferring controllers without system identification}%
\label{subsec:inferfb}

The data informativity concept was orignally developed for system
identification~\cite{GevBBetal09}.
It was extended in~\cite{VanETetal20} to data-driven controller design
and shows that fewer than $\nh + \np$ data samples can be sufficient for
learning a stabilizing controller $K$.
Consider the set of state-space models that explain a given data triplet
$\datatrip$
\begin{align*}
  \Sigma_{\rm{i/s}} & := \left\{ (A, B) \left\lvert~ X_{+} = A X_{-} + B
    U_{-} \right. \right\}.
\end{align*}
There can be many state-space models of a single system that explain the data
$\datatrip$ in the sense of 
\begin{equation}
X_+ = AX_- + BU_-.
\label{eq:prelim:explainData}
\end{equation}
Additionally, there can be different systems that explain a data triplet.

Let further
\begin{align*}
  \Sigma_{K} & := \left\{ (A, B) \left\lvert~ A + B K
    \text{ is asymptotically stable} \right. \right\}
\end{align*}
be the set of state-space models that are stabilized by a given
controller $K$.
If there exists a $K$ such that $\Sigma_{\rm{i/s}} \subseteq
\Sigma_{K}$ holds, then the data triplet $\datatrip$ is called informative for
stabilization by state feedback; see~\cite{VanETetal20} for details.
In other words, the data triplet is informative for stabilization by
feedback if and only if there exists a stabilizing controller that
stabilizes all state-space models and thus all systems that explain the data in
the sense of~\cref{eq:prelim:explainData}.

\begin{proposition}[Data informativity in discrete time~\cite{VanETetal20}]%
  \label{prp:constfeed}
  Let $\datatrip$ be a data triplet sampled from a discrete-time state-space
  model.
  The data triplet is informative for stabilization if and only if one of the
  following two equivalent statements holds:
  \begin{enumerate}
    \item The matrix $X_{-}$ has full row rank and there exists a right
      inverse $X_{-}^{\dagger}$ of $X_{-}$ such that $X_{+}
      X_{-}^{\dagger}$ is (discrete-time) asymptotically stable.
      The controller is then given by $K = U_{-} X_{-}^{\dagger}$
      that satisfies $\Sigma_{\rm{i/s}} \subseteq \Sigma_{K}$.
    \item There exists a matrix $\Theta \in \R^{\dT \times \nh}$ such that
      \begin{align} \label{eqn:dtlmi}
        \begin{aligned}
          X_{-} \Theta & = (X_{-} \Theta)^{\trans} & \text{and} &&
            \begin{bmatrix} X_{-} \Theta & X_{+} \Theta \\
            (X_{+} \Theta)^{\trans} & X_{-} \Theta \end{bmatrix} > 0.
        \end{aligned}
      \end{align}
      The controller is then given by $K = U_{-} \Theta (X_{-} \Theta)^{-1}$ 
      that satisfies $\Sigma_{\rm{i/s}} \subseteq \Sigma_{K}$.
  \end{enumerate}
\end{proposition}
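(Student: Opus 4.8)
The plan is to prove the two equivalences in turn: informativity for stabilization $\iff$ statement~1, and statement~1 $\iff$ statement~2. I would begin with the easy direction of the first equivalence, namely that statement~1 is sufficient. The key observation is that a right inverse collapses the entire data-consistent family onto a single closed-loop matrix. Setting $K = U_{-} X_{-}^{\dagger}$ with $X_{-} X_{-}^{\dagger} = I_{\nh}$ gives
\begin{align*}
  \begin{bmatrix} I_{\nh} \\ K \end{bmatrix}
    = \begin{bmatrix} X_{-} \\ U_{-} \end{bmatrix} X_{-}^{\dagger},
\end{align*}
so that for every $(A, B) \in \Sigma_{\rm{i/s}}$ one has $A + B K = \begin{bmatrix} A & B \end{bmatrix} \begin{bmatrix} X_{-} \\ U_{-} \end{bmatrix} X_{-}^{\dagger} = X_{+} X_{-}^{\dagger}$, using $A X_{-} + B U_{-} = X_{+}$. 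Hence \emph{every} model explaining the data has the identical closed loop $X_{+} X_{-}^{\dagger}$, and asymptotic stability of this single matrix immediately yields $\Sigma_{\rm{i/s}} \subseteq \Sigma_{K}$.

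The necessity direction carries the real content, and I would argue it by perturbing inside $\Sigma_{\rm{i/s}}$. Given a $K$ with $\Sigma_{\rm{i/s}} \subseteq \Sigma_{K}$, fix one model $(A_{*}, B_{*}) \in \Sigma_{\rm{i/s}}$ and write $M_{*} := A_{*} + B_{*} K$. Every other model is $(A_{*}, B_{*}) + N$ with $N \begin{bmatrix} X_{-} \\ U_{-} \end{bmatrix} = 0$, so the achievable closed loops form the affine set $M_{*} + \Dcal$ with $\Dcal := \{ N \begin{bmatrix} I_{\nh} \\ K \end{bmatrix} : N \begin{bmatrix} X_{-} \\ U_{-} \end{bmatrix} = 0 \}$, a linear subspace of the form $\{ Y c : Y \in \R^{\nh \times \ell} \}$ for a fixed matrix $c$ (the rows of $N$ range over the left null space of $\begin{bmatrix} X_{-} \\ U_{-} \end{bmatrix}$). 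This subspace therefore contains every rank-one matrix $e_{i} r^{\trans}$ with $r^{\trans}$ in the row space of $c$. Applying the elementary bound that a Schur matrix $M$ satisfies $\lvert \mathrm{tr}(M) \rvert < \nh$ to $M_{*} + t\, e_{i} r^{\trans}$, whose trace equals $\mathrm{tr}(M_{*}) + t\, r_{i}$, and letting $t \to \pm\infty$ forces $r_{i} = 0$ for all $i$ and all admissible $r$. Thus $c = 0$, i.e. $\begin{bmatrix} I_{\nh} \\ K \end{bmatrix}$ is orthogonal to the left null space and so lies in the range of $\begin{bmatrix} X_{-} \\ U_{-} \end{bmatrix}$; writing $\begin{bmatrix} I_{\nh} \\ K \end{bmatrix} = \begin{bmatrix} X_{-} \\ U_{-} \end{bmatrix} G$, the top block gives $X_{-} G = I_{\nh}$ (so $X_{-}$ has full row rank and $G =: X_{-}^{\dagger}$ is a right inverse), the bottom block gives $K = U_{-} X_{-}^{\dagger}$, and the closed loop $X_{+} X_{-}^{\dagger}$ is Schur because it is one of the uniformly stable closed loops. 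This is exactly statement~1.

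For the equivalence of statements~1 and~2, I would route through a Lyapunov certificate and a Schur complement. Statement~1 says $X_{+} X_{-}^{\dagger}$ is Schur, equivalently there is a positive definite $P$ with $(X_{+} X_{-}^{\dagger}) P (X_{+} X_{-}^{\dagger})^{\trans} \prec P$. The substitution $\Theta := X_{-}^{\dagger} P$ then yields $X_{-} \Theta = P$ (symmetric and positive definite, as required) and $X_{+} \Theta = (X_{+} X_{-}^{\dagger}) P$, and the Schur complement of the lower-right block turns the positivity of the block matrix in~\cref{eqn:dtlmi} into precisely this Lyapunov inequality; moreover $U_{-} \Theta (X_{-} \Theta)^{-1} = U_{-} X_{-}^{\dagger}$ recovers the controller of statement~1. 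Conversely, any $\Theta$ feasible for~\cref{eqn:dtlmi} gives a right inverse $X_{-}^{\dagger} := \Theta (X_{-} \Theta)^{-1}$ of $X_{-}$ and, by reversing the Schur complement with $P := X_{-} \Theta$, a Lyapunov inequality certifying that $X_{+} X_{-}^{\dagger}$ is Schur. I expect the main obstacle to be the necessity argument of the first equivalence: recognizing that one should probe $\Sigma_{\rm{i/s}}$ with rank-one perturbations and convert the demand of stabilizing an entire affine family by a single $K$ into the clean range condition $\begin{bmatrix} I_{\nh} \\ K \end{bmatrix} \in \operatorname{im} \begin{bmatrix} X_{-} \\ U_{-} \end{bmatrix}$. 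Once that geometric statement is in hand, the remaining steps are routine manipulations with right inverses and the Schur complement.
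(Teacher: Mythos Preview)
The paper does not prove this proposition; it is quoted as a known result from~\cite{VanETetal20} and stated without proof. So there is no ``paper's own proof'' to compare against, and your proposal should be judged on its own merits.

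Your argument is sound. The sufficiency direction of statement~1 is exactly the collapse-to-a-single-closed-loop computation you give. For necessity, your perturbation argument is correct: writing the admissible closed loops as $M_{*}+\mathcal{D}$ with $\mathcal{D}=\{Yc:Y\in\R^{\nh\times\ell}\}$, the trace bound $\lvert\mathrm{tr}(M)\rvert<\nh$ for Schur matrices applied to $M_{*}+t\,e_{i}r^{\trans}$ does force every row of $c$ to vanish, hence $\begin{bmatrix} I_{\nh}\\ K\end{bmatrix}$ lies in the range of $\begin{bmatrix} X_{-}\\ U_{-}\end{bmatrix}$, which is the clean way to extract both the full-row-rank condition and the right inverse. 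The equivalence of statements~1 and~2 via the substitution $\Theta=X_{-}^{\dagger}P$ and a Schur complement is standard and you have the details right, including that positivity of the $2\times 2$ block matrix forces $X_{-}\Theta\succ 0$ and hence full row rank of $X_{-}$ in the converse direction. One cosmetic remark: when you introduce $c$, it helps to record explicitly that $c=Z^{\trans}\begin{bmatrix} I_{\nh}\\ K\end{bmatrix}$ with the columns of $Z$ a basis of the left null space of $\begin{bmatrix} X_{-}\\ U_{-}\end{bmatrix}$; this makes the passage from $c=0$ to the range condition immediate rather than implicit.
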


\begin{corollary}[Data informativity in continuous time]%
  \label{cor:constfeed}
  Let $\datatrip$ be a data triplet sampled from a continuous-time state-space
  model.
  The data triplet is informative for stabilization if and only if one of the
  following two equivalent statements holds:
  \begin{enumerate}
    \item The matrix $X_{-}$ has full row rank and there exists a right
      inverse $X_{-}^{\dagger}$ of $X_{-}$ such that $X_{+}
      X_{-}^{\dagger}$ is (continuous-time) asymptotically stable.
      The controller is then given by $K = U_{-} X_{-}^{\dagger}$
      that satisfies $\Sigma_{\rm{i/s}} \subseteq \Sigma_{K}$.
    \item There exists a matrix $\Theta \in \R^{\dT \times \nh}$ such that
      \begin{align} \label{eqn:ctlmi}
        \begin{aligned}
          X_{-} \Theta & > 0 & \text{and} &&
            X_{+} \Theta + \Theta^{\trans} X_{+}^{\trans} < 0.
        \end{aligned}
      \end{align}
      The controller is then given by $K = U_{-} \Theta (X_{-} \Theta)^{-1}$ 
      that satisfies $\Sigma_{\rm{i/s}} \subseteq \Sigma_{K}$.
  \end{enumerate}
\end{corollary}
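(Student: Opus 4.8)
The plan is to establish the chain ``informative for stabilization $\iff$ (1) $\iff$ (2)'' by mirroring the discrete-time argument behind \Cref{prp:constfeed}, replacing the Schur stability region by the Hurwitz one and the discrete (Stein) Lyapunov relation by the continuous-time Lyapunov inequality. I would start from the single identity that drives both sufficiency directions. Fix $\Theta \in \R^{\dT \times \nh}$ with $X_- \Theta$ invertible, put $P := X_- \Theta$ and $K := U_- \Theta P^{-1}$, and note that for every model $(A,B)$ explaining the data, $X_+ = A X_- + B U_-$,
\begin{align*}
  (A + B K) P & = A X_- \Theta + B U_- \Theta = (A X_- + B U_-) \Theta = X_+ \Theta,
\end{align*}
so $(A + BK)P$ equals the data matrix $X_+ \Theta$ \emph{for every} $(A,B) \in \Sigma_{\rm{i/s}}$ at once. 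If in addition~\cref{eqn:ctlmi} holds, then $P > 0$ is symmetric and $(A + BK)P + P(A + BK)^{\trans} = X_+ \Theta + \Theta^{\trans} X_+^{\trans} < 0$ is a continuous-time Lyapunov inequality certifying that $A + BK$ is Hurwitz; as the same $K = U_- \Theta (X_- \Theta)^{-1}$ works for all models, statement~(2) implies informativity.

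For (1) $\iff$ (2) I would use only Lyapunov theory and the right-inverse structure. Given (1), with $M := X_+ X_-^\dagger$ Hurwitz, solve $M P + P M^{\trans} < 0$ for some $P > 0$ and set $\Theta := X_-^\dagger P$; since $X_- X_-^\dagger = I$ one gets $X_- \Theta = P > 0$ and $X_+ \Theta = M P$, so~\cref{eqn:ctlmi} holds. Conversely, $X_- \Theta > 0$ forces $X_- \Theta \in \R^{\nh \times \nh}$ to be invertible, hence $\rank(X_-) = \nh$, and $X_-^\dagger := \Theta (X_- \Theta)^{-1}$ is a right inverse for which the displayed identity gives $(X_+ X_-^\dagger)P + P(X_+ X_-^\dagger)^{\trans} < 0$ with $P = X_- \Theta > 0$, so $X_+ X_-^\dagger$ is Hurwitz; the two controllers coincide, $K = U_- X_-^\dagger = U_- \Theta (X_- \Theta)^{-1}$. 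Together with the first paragraph and the direct check that the $K$ of~(1) yields the data-dependent closed loop $A + BK = X_+ X_-^\dagger$, this gives ``(1) or (2) $\Rightarrow$ informative''.

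The remaining necessity is where I expect the real work. The full-row-rank half is accessible: if $\rank(X_-) < \nh$, choose $v \neq 0$ with $v^{\trans} X_- = 0$; for any $(A,B) \in \Sigma_{\rm{i/s}}$ and any $w$, the pair $(A + w v^{\trans}, B)$ still explains the data because $w v^{\trans} X_- = 0$, while for a fixed candidate $K$ the closed loop $A + BK + w v^{\trans}$ is a rank-one update whose dominant eigenvalue can be pushed arbitrarily far into the right half-plane by scaling $w$ so that $v^{\trans} w > 0$, so no single $K$ stabilizes all of $\Sigma_{\rm{i/s}}$. The genuinely hard implication is ``informative $\Rightarrow$ (2)'': one must upgrade the mere existence of some stabilizing $K$ for the affine family $\Sigma_{\rm{i/s}}$ to feasibility of the \emph{structured} data LMI~\cref{eqn:ctlmi}, i.e.\ to a common quadratic Lyapunov certificate of data-dependent form. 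For this I would follow \Cref{prp:constfeed}: describe the admissible closed-loop matrices $A + BK$ through the left null space of $\left[\begin{smallmatrix} X_- \\ U_- \end{smallmatrix}\right]$ and apply the matrix S-lemma exactly as in the discrete-time proof, with the Stein inequality replaced everywhere by the Lyapunov inequality of~\cref{eqn:ctlmi}. Since the stability region enters that argument only through the final matrix inequality, the discrete-time derivation carries over once Schur is exchanged for Hurwitz --- which is precisely why this is a corollary rather than a separate theorem.
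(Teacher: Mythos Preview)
Your proposal is correct and follows the same route as the paper: adapt the discrete-time result of \Cref{prp:constfeed} to the continuous-time setting by replacing the Stein/Schur condition with the Lyapunov/Hurwitz one. The paper's own proof is a one-line citation to \Cref{prp:constfeed} and to~\cite[Remark~2]{DePT20}, whereas you spell out the key identity $(A+BK)(X_-\Theta)=X_+\Theta$, the equivalence (1)$\iff$(2), and the rank-deficiency perturbation argument explicitly; but the substance is identical, and your sketch of the remaining necessity direction (defer to the discrete-time argument with the stability region swapped) is exactly what the paper does by reference.
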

\begin{proof}
  The proof follows directly from the discrete-time case in
  \Cref{prp:constfeed} and the con\-tin\-u\-ous-time conditions for
  data-based feedback construction in~\cite[Remark~2]{DePT20}.
\end{proof}

The condition on the full row rank of $X_{-}$ in
\Cref{prp:constfeed,cor:constfeed} implies that at least $\nh$ data samples
are needed for feedback construction from observed states in general, which is
$\np$ fewer states than minimally required for identifying a model;
cf. \Cref{prp:sysident}.
However, the minimal number of data samples $\nh$ still depends on the dimension
of the sampled states, which is potentially high; in particular, for dynamical
systems stemming from discretizations of partial differential equations.


\section{Inferring low-dimensional controllers from high-dimensional states}%
\label{sec:infercon}

In this section, we establish the sample complexity for constructing stabilizing
controllers with high-dimensional state samples from intrinsically
low-dimensional systems.
We show that if the system of interest has intrinsic dimension $\nr$, then there
exist $\nr$ states from which a stabilizing feedback controller can be
constructed.
This is in contrast to the results surveyed in \Cref{sec:basics}, where
the number of data samples scales with the dimension of the observed states
rather than the intrinsic dimension of the system of interest.
We show further that a strictly lower number of samples than the intrinsic
dimension of a system is insufficient for finding controllers that stabilizes
all systems from which the observed states can be sampled.
Thus, if only $\nr - 1$ or fewer states are observed, then there cannot be a
feedback controller $K$ that stabilizes all systems that can produce the
observed states; in particular, a constructed $K$ might not stabilize the actual
system of interest from which data have been sampled.


\subsection{Controller inference for stabilizing intrinsically low-dimensional
  systems}%
\label{subsec:lowdimmod}

In this section, we consider low dimensional systems.
Recall that $\nh$ is the dimension of the states that are sampled
from a model of the system of interest. The sampled states define the data
triplet $\datatrip$.
A system is called low dimensional if there exists an $\nr \in \N$ with
$\nr < \nh$ and a full-rank matrix $V \in \R^{\nh \times \nr}$ such that for
all initial conditions $x_{0} \in \Xcal_{0}$ and any inputs
$u(t) \in \R^{\np}$ there exist reduced states
$\xr(t) \in \R^{\nr}$ of dimension $\nr$ with
\begin{align} \label{eqn:CInf:IntDim}
  \begin{aligned}
    x(t) & = V \xr(t), & \text{for all}~t \geq 0.
  \end{aligned}
\end{align}
Equivalently, since $V$ has full rank, this means that there are
$\nr$-dimensional state-space models of the system with states $\xr(t)
\in \R^{\nr}$ satisfying~\cref{eqn:CInf:IntDim}.
The intrinsic (minimal) state-space dimension $\nmin$ of the system, i.e., the
smallest state-space dimension of $\xr(t)$ such that~\cref{eqn:CInf:IntDim}
holds, is uniquely determined.
The minimal dimension $\nmin$ depends on the
controllability of the corresponding state-space realizations and on the initial
conditions from $\Xcal_{0}$. The minimal dimension $\nmin$ coincides with the
McMillan degree of the system if the space of initial conditions $\Xcal_0$ has
dimension 0, $\Xcal_0 = \{0\}$; cf.~\cite[Sec.~4.2.2]{Ant05}.
In many applications, the states describe the
deviation from a desired steady state and then considering only initial
condition $\Xcal_0 = \{0\}$ is a common choice.
However, for example, if $\Xcal_{0} = \R^{\nh}$, then any possible state in
$\R^{\nh}$ can be reached as initial condition and then the state-space model is
minimal independent of its controllability. 

In the following, we refer to $V$ as basis matrix, to $\nh$ as the
high dimension and to $\nr \geq \nmin$ as the reduced dimension.
Note that $\nr$ does not have to be the minimal dimension $\nmin$.

  
\subsubsection{Lifting controllers}

The Kalman controllability form of a state-space model will be helpful in
the following.
For our purposes, we will use the following variant:
For a state-space model~\cref{eqn:dtsys} (or~\cref{eqn:ctsys}) and
basis matrix $X_{0} \in \R^{\nh \times \nq}$ of the initial conditions'
subspace, i.e., $X_{0}$ is full-rank and the span of the columns of $X_0$ is
$\Xcal_{0}$, there exists an invertible $S \in \R^{\nh \times \nh}$ such that
\begin{align} \label{eqn:extcontform}
  \begin{aligned}
    S^{-1} A S & = \begin{bmatrix} A_{11} & A_{12} & A_{13}\\ 0 & A_{22} &
      A_{23}\\ 0 & 0 & A_{33} \end{bmatrix}, &
    S^{-1} B & = \begin{bmatrix} B_{1} \\ 0 \\ 0 \end{bmatrix},& 
    S^{-1} X_{0} & = \begin{bmatrix} X_{10} \\ X_{20} \\ 0 \end{bmatrix}
  \end{aligned}
\end{align}
with the matrix blocks $A_{11} \in \R^{\nc \times \nc}$,
$A_{12} \in \R^{\nc \times \nx}$,
$A_{13} \in \R^{\nc \times (\nh - \nc - \nx)}$,
$A_{22} \in \R^{\nx \times \nx}$,
$A_{23} \in \R^{\nx \times (\nh - \nc - \nx)}$,
$A_{33} \in \R^{(\nh - \nc - \nx) \times (\nh - \nc - \nx)}$,
$B_{1} \in \R^{\nc \times \np}$,
$X_{10} \in \R^{\nc \times \nq}$,
$X_{20} \in \R^{\nx \times \nq}$,
where the dimension $\nh - \nc - \nx$ of the last block row is maximal; see,
e.g.,~\cite{Ros70, Voi15}.
The first block row in~\cref{eqn:extcontform} is the controllable part
of the system, which can be influenced by the control inputs $u(t)$.
The corresponding dimension of the controllability subspace is given by the
size $\nc \in \N_{0}$ of the block matrices.
Similarly, the second block row in~\cref{eqn:extcontform} corresponds to the
system components that cannot be controlled but are steered by the
initial conditions.
The corresponding dimension is denoted by $\nx \in \N_{0}$.
The last block row of~\cref{eqn:extcontform} describes the components of the
state that are neither excited by inputs nor by the initial condition.
In the state-space model~\cref{eqn:extcontform}, they remain zero over
time, independent of $A_{33}$.

The following lemma relates the dimensions of the blocks in the
form~\cref{eqn:extcontform} to low-dimensional state spaces.

\begin{lemma}[Low-dimensional subspaces and state-space dimensions]%
  \label{lmm:minsys}
  Let $V_{\min} \in \R^{\nh \times \nmin}$ be a basis matrix such
  that~\cref{eqn:CInf:IntDim} holds with $\nmin$ the minimal dimension of the
  underlying system and $\Vcal_{\min}$ the corresponding subspace.
  For all basis matrices $V \in \R^{\nh \times \nr}$ that
  satisfy~\cref{eqn:CInf:IntDim}, with corresponding subspaces $\Vcal$, it
  holds that
  \begin{align*}
    \Vcal_{\min} \subseteq \Vcal,
  \end{align*}
  and that
  \begin{align*}
    \nc + \nx = \nmin \leq \nr \leq \nh,
  \end{align*}
  where $\nc$ and $\nx$ are the block matrix sizes from~\cref{eqn:extcontform}.
  In the special case of homogeneous initial conditions,
  $\Xcal_{0} = \{0\}$, the lower bound on the dimensions simplifies to
  \begin{align*}
    \nc = \nmin \leq \nr \leq \nh.
  \end{align*}
\end{lemma}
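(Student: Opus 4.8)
The plan is to identify the minimal subspace $\Vcal_{\min}$ with the reachable subspace of the system and then read off its dimension from the controllability form~\cref{eqn:extcontform}. Write $\mathcal{R} := \mspan\{x(t) : x(0) \in \Xcal_0,~u~\text{arbitrary},~t \geq 0\}$ for the span of all reachable states. A subspace $\Vcal = \mspan(V)$ satisfies~\cref{eqn:CInf:IntDim} exactly when it contains every trajectory point $x(t)$, so the smallest such subspace is $\mathcal{R}$ itself and $\Vcal_{\min} = \mathcal{R}$; moreover every admissible $\Vcal$ contains $\mathcal{R}$, which is the claimed inclusion $\Vcal_{\min} \subseteq \Vcal$. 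Taking dimensions and using that $V \in \R^{\nh \times \nr}$ has full column rank then yields $\nmin = \dim \Vcal_{\min} \leq \dim \Vcal = \nr \leq \nh$, leaving only the identity $\nc + \nx = \nmin$ to be shown.

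The remaining and central step is to show $\dim \mathcal{R} = \nc + \nx$ from~\cref{eqn:extcontform}. First I would record that, with $x(0) = 0$ and varying inputs, the reachable states sweep out the controllability subspace $\langle A \mid \mspan(B)\rangle$, whereas with $u \equiv 0$ and $x(0)$ ranging over $\Xcal_0$ they sweep out $\langle A \mid \Xcal_0\rangle$; since a general trajectory is the sum of an input-driven and an initial-condition-driven part, $\mathcal{R}$ equals the smallest $A$-invariant subspace containing $\mspan(B) + \Xcal_0$. (Here $\langle A \mid \mathcal{W}\rangle$ denotes the smallest $A$-invariant subspace containing a set $\mathcal{W}$; the continuous-time case is identical once powers of $A$ are replaced by the matrix exponential, which generates the same invariant subspace.) On the other hand, the block-triangular pattern of $S^{-1} A S$ shows that the span $\Vcal_{\rm{K}}$ of the first $\nc + \nx$ columns of $S$ is $A$-invariant, and the zero blocks of $S^{-1} B$ and $S^{-1} X_{0}$ show that $\Vcal_{\rm{K}}$ contains both $\mspan(B)$ and $\Xcal_0$. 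The maximality of the size $\nh - \nc - \nx$ of the last block in~\cref{eqn:extcontform} forces $\Vcal_{\rm{K}}$ to be the \emph{smallest} such invariant subspace, so $\Vcal_{\rm{K}} = \mathcal{R}$ and hence $\dim \mathcal{R} = \nc + \nx$; together with $\Vcal_{\min} = \mathcal{R}$ this gives $\nc + \nx = \nmin$.

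For the special case $\Xcal_0 = \{0\}$, the basis matrix $X_{0}$ has no columns, so the generating set reduces to $\mspan(B)$ and $\mathcal{R} = \langle A \mid \mspan(B)\rangle$ is the controllability subspace alone; equivalently the second block of~\cref{eqn:extcontform} is absent, $\nx = 0$, and the identity collapses to $\nc = \nmin$.

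I expect the main obstacle to lie in the second paragraph: making precise that the reachable states span, and not merely lie in, the first $\nc + \nx$ columns of $S$, i.e.\ tying the maximality of the last block of~\cref{eqn:extcontform} to the characterization of $\mathcal{R}$ as the smallest $A$-invariant subspace containing $\mspan(B) + \Xcal_0$, uniformly over the discrete- and continuous-time settings.
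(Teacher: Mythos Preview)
Your proposal is correct and follows essentially the same route as the paper: both arguments pass to the extended controllability form~\cref{eqn:extcontform}, observe that every state lies in the span of the first $\nc + \nx$ columns of $S$, and then invoke the maximality of the last block to conclude that these columns are exactly $\Vcal_{\min}$; the inclusion $\Vcal_{\min} \subseteq \Vcal$ and the dimension chain follow immediately. The only cosmetic difference is that you phrase the argument in terms of the smallest $A$-invariant subspace containing $\mspan(B) + \Xcal_{0}$, whereas the paper works directly with the partitioned state $\xt(t) = [x_{\rm c}(t);\,x_{\rm x}(t);\,0]$ and argues that the first two blocks must span full $\nc$- and $\nx$-dimensional subspaces by maximality; both are the same observation in different language.
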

\begin{proof}
  For the proof, we first have a look 
  at~\cref{eqn:extcontform} since any state-space
  model can be transformed into that form.
  All states of~\cref{eqn:extcontform} can be written as
  \begin{align} \label{eqn:extcontstate}
    \xt(t) & = \begin{bmatrix} x_{\rm{c}}(t) \\
      x_{\rm{x}}(t) \\ 0 \end{bmatrix},
  \end{align}
  partitioned according to the block structure of~\cref{eqn:extcontform}.
  Due to the inputs spanning a $p$-dimensional subspace and the initial
  conditions taken from $\Xcal_{0}$, the set of all states of the system
  associated with~\cref{eqn:extcontform} is a subspace.
  In particular, the set of partitioned states $x_{\rm{c}}(t)$ is an
  $\nc$-dimensional and of $x_{\rm{x}}(t)$ an $\nx$-dimensional subspace, since
  otherwise the dimension of the last block row in~\cref{eqn:extcontform} is
  not maximal.
  With concatenation of the partitioned states in~\cref{eqn:extcontstate}, the
  minimal state-space dimension of the system associated
  with~\cref{eqn:extcontform} is given by
  \begin{align*}
    \nmin & = \nc + \nx.
  \end{align*}
  Also, from~\cref{eqn:extcontstate} it follows that there exists a basis
  matrix $\Vt_{\min} \in \R^{\nh \times \nmin}$ such that
  $\xt(t) = \Vt_{\min} \hat{\xt}(t)$, where $\hat{\xt}(t) \in \R^{\nmin}$
  is the state of a minimal state-space model of the system.
  For any other basis $\Vt \in \R^{\nh \times \nr}$, with $\xt(t) = \Vt
  \hat{\xt}_{2}(t)$, it must hold that
  \begin{align*}
    \mspan(\Vt_{\min}) & \subseteq \mspan(\Vt),
  \end{align*}
  since otherwise there are states $\xt(t)$ in $\mspan(\Vt_{\min})$ that do not
  yield the equality $\xt(t) = \Vt \hat{\xt}_{2}(t)$.
  Consequently, the results of the lemma hold for~\cref{eqn:extcontform}.
  By restoring the original states of the order $\nh$ state-space model using
  $x(t) = S \xt(t)$ and observing that the transformation $S$ does not change
  the dimensions of subspaces nor inclusion arguments, the results hold.
\end{proof}

\begin{theorem}[Lifting controllers]%
  \label{thm:lowdimfeed}
  Consider a stabilizable system from which states with dimension $\nh$ can be
  sampled. 
  Let now $V \in \R^{\nh \times \nr}$ be a basis matrix with $\nr \leq \nh$
  for which~\cref{eqn:CInf:IntDim} holds.
  Let further $\Kr \in \R^{\np \times \nr}$ be a stabilizing controller of the
  system if applied as feedback to the low-dimensional states $\xr(t)$.
  Then, for any left inverse $V^{\dagger}$ of $V$, the matrix
  $K = \Kr V^{\dagger}$ stabilizes the system if it is applied as feedback
  controller to the high-dimensional states $x(t)$.
\end{theorem}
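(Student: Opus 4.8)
The plan is to show that closing the loop on the high-dimensional model with the lifted controller $K = \Kr V^{\dagger}$ reproduces exactly the reduced closed-loop dynamics that $\Kr$ is assumed to stabilize, so that the high-dimensional trajectory inherits the decay of the reduced trajectory. First I would fix an arbitrary admissible initial condition $x_{0} \in \Xcal_{0}$ and let $x(t)$ denote the resulting closed-loop trajectory of the high-dimensional model under the feedback input $u(t) = K x(t) = \Kr V^{\dagger} x(t)$. Because~\cref{eqn:CInf:IntDim} is assumed to hold for every initial condition in $\Xcal_{0}$ and for any input, it applies to this particular feedback input as well: there is a reduced trajectory $\xr(t) \in \R^{\nr}$ of an associated $\nr$-dimensional state-space model with $x(t) = V \xr(t)$ for all $t \geq 0$.

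The key step is then to rewrite the feedback in terms of the reduced state. Using $x(t) = V \xr(t)$ together with the left-inverse identity $V^{\dagger} V = I_{\nr}$, I would obtain $u(t) = \Kr V^{\dagger} V \xr(t) = \Kr \xr(t)$, so that $\xr(t)$ solves the reduced closed-loop system with matrix $\Ar + \Br \Kr$. By hypothesis $\Kr$ stabilizes the system when applied to the reduced states, hence $\xr(t) \to 0$. Since $V$ is a fixed matrix, $\lVert x(t) \rVert = \lVert V \xr(t) \rVert \leq \lVert V \rVert \, \lVert \xr(t) \rVert \to 0$, and as this holds for every $x_{0} \in \Xcal_{0}$ the high-dimensional closed-loop system is asymptotically stable, i.e.\ $K$ stabilizes the system. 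The continuous-time case follows by the same argument, replacing the one-step recursion with the flow of $\dot{x} = (A + BK) x$ and of the reduced closed-loop system.

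Two points need care, and I expect the second to be the main obstacle. First, the identity $u(t) = \Kr \xr(t)$ does not depend on which left inverse is used: since $x(t) \in \mspan(V)$, every left inverse returns $V^{\dagger} x(t) = \xr(t)$, which is exactly why the statement may quantify over \emph{any} left inverse $V^{\dagger}$. Second, I must honor the paper's distinction between stabilizing a model and stabilizing a system and avoid arguing through the spectrum of $A + BK$: the lifted matrix $A + BK$ may keep unstable eigenvalues, for instance on the block $A_{33}$ in~\cref{eqn:extcontform} that is neither controllable nor excited by the initial conditions. The correct claim is that every admissible trajectory decays, and the argument above secures this by confining the state to $\mspan(V)$ via~\cref{eqn:CInf:IntDim} and transporting the decay from the reduced model. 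The delicate part is precisely that applying the high-dimensional feedback does not push the state out of $\mspan(V)$; this is guaranteed because~\cref{eqn:CInf:IntDim} holds for all inputs, including the feedback input that the controller itself generates.
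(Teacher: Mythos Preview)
Your argument is correct and takes a genuinely different route from the paper's proof. You work purely at the trajectory level: since~\cref{eqn:CInf:IntDim} holds for arbitrary inputs, the closed-loop trajectory under $u(t)=Kx(t)$ stays in $\mspan(V)$, whence $V^{\dagger}x(t)=\xr(t)$ for every left inverse and $Kx(t)=\Kr\xr(t)$; thus the high-dimensional closed loop is literally the same feedback loop that $\Kr$ is assumed to stabilize, and $x(t)=V\xr(t)\to 0$. This is shorter and avoids the extended Kalman controllability form~\cref{eqn:extcontform}, \Cref{lmm:minsys}, and the case split $\nr=\nmin$ versus $\nr>\nmin$ that the paper carries out.

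The paper, by contrast, works at the model level: it fixes a stabilizable $\nh$-dimensional realization, passes to the block-triangular form~\cref{eqn:extcontform}, and shows that the lifted controller makes the controllable block $A_{11}+B_{1}\Kt_{1}$ asymptotically stable while the uncontrolled blocks $A_{22},A_{33}$ can be taken stable. This buys more than your argument: it exhibits an explicit $\nh$-dimensional model $(A,B)$ with $A+BK$ stable and identifies which part of the spectrum the feedback actually moves, which is precisely what is used to prove \Cref{cor:spectrum} (the closed-loop spectrum is independent of the choice of $V^{\dagger}$). Your trajectory argument gives the theorem cleanly but does not directly yield that spectral statement, so if you want to recover the corollary you would still need to revisit the block structure the paper develops.
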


Before we continue to the proof of \Cref{thm:lowdimfeed}, we
discuss its results first.
The theorem states that for any left inverse $V^{\dagger}$, the lifted
controller $K = \Kr V^{\dagger}$ stabilizes the system in the sense that there
exists an $\nh$-dimensional state-space model $(A, B)$ of the system such that
the matrix $A + B K$ is stable.
Similarly, the low-dimensional controller $\Kr$ stabilizes the system in the
sense that there exists a state-space model $(\Ar, \Br)$ obtained by the
basis matrix $V$ and the chosen left inverse $V^{\dagger}$ from a
high-dimensional state-space model $(\bar{A}, \bar{B})$, which is potentially
different from $(A, B)$, such that the closed-loop matrix
$\Ar + \Br \Kr$ is asymptotically stable.
In the case of $\nr > \nmin$, there might be unstabilizable $\nr$-dimensional
state-space models for which no stabilizing controller $\Kr$ can be constructed.
However, since the underlying system is stabilizable, there have to exist
stabilizable $\nr$-dimensional state-space models that describe the same
system.
If instead $\nr = \nmin$, then $(\Ar, \Br)$ is uniquely determined by
$(\bar{A}, \bar{B})$ and $V$. And, if $(\bar{A}, \bar{B})$ is a model of a
stabilizable system then $(\Ar, \Br)$ is guaranteed to be a stabilizable model.
Therefore, $\Kr$ depends only on the choice of $V$ if $\nr = \nmin$ but
additionally on $V^{\dagger}$ if $\nr > \nmin$.

\begin{proof}[Proof of \Cref{thm:lowdimfeed}.]
  We consider the extended controllability form~\cref{eqn:extcontform}
  of the unknown underlying $\nh$-dimensional state-space model.
  Without loss of generality we assume that
  \begin{align} \label{eqn:AB}
    (A, B)
  \end{align}
  is a stabilizable state-space model, since there must exist a stabilizable
  model for the underlying stabilizable system.
  Consequently, only $A_{11}$ has unstable eigenvalues.
  Let 
  \begin{equation}
  K = \begin{bmatrix} \Kt_{1} & \Kt_{2} & \Kt_{3} \end{bmatrix} T^{-1}
  \label{eq:TH31Auxiliary3}
  \end{equation}
  be
  a feedback matrix.
  The closed-loop matrix of~\cref{eqn:extcontform} is then given by
  \begin{align}
    & \begin{bmatrix} A_{11} & A_{12} & A_{13}\\ 0 & A_{22} &
      A_{23}\\ 0 & 0 & A_{33} \end{bmatrix} +
      \begin{bmatrix} B_{1} \\ 0 \\ 0 \end{bmatrix}
      \begin{bmatrix} \Kt_{1} & \Kt_{2} & \Kt_{3} \end{bmatrix}\notag\\
    & = \begin{bmatrix} A_{11} + B_{1} \Kt_{1} & A_{12} + B_{1} \Kt_{2} &
      A_{13} + B_{1} \Kt_{3}\\ 0 & A_{22} &
      A_{23}\\ 0 & 0 & A_{33} \end{bmatrix}.
      \label{eq:TH31:HelperAux1}
  \end{align}
  The eigenvalues of only the controllable block row in $A_{11}$
  are influenced by the feedback.
  Thus, the feedback $K$ stabilizes the underlying system if and only if
  $A_{11} + B_{1} \Kt_{1}$ is asymptotically stable, because the eigenvalues of
  a block triangular matrix are the union of the eigenvalues of the diagonal
  blocks.
  We now consider a case distinction on the considered reduced dimension $\nr$.
  
  Case 1 with $\nr = \nmin$: Let $V_{\min} \in \R^{\nh \times \nmin}$ be a basis matrix of the
  smallest subspace such that $x(t) = V_{\min} x_{\min}(t)$ holds for all
  $t \geq 0$, with $x(t)$ the state of~\cref{eqn:AB}.
  Since the underlying system is stabilizable, there exists a
  stabilizing feedback $K_{\min}$ for the minimal state-space model
  $(A_{\min}, B_{\min})$ associated with $V_{\min}$ and $V_{\min}^{\dagger}$ by
  \begin{align} \label{eqn:minrel}
    \begin{aligned}
      A_{\min} & = V_{\min}^{\dagger} A V_{\min}, &
        B_{\min} & = V_{\min}^{\dagger} B.
    \end{aligned}
  \end{align}
  Then, we know from \Cref{lmm:minsys} that $\nmin = \nc + \nx$.
  Also, by truncating the zeros in~\cref{eqn:extcontstate}, there must exist
  a transformation $\St$ such that
  \begin{align} \label{eqn:minblock}
    \begin{aligned}
      \St^{-1} V_{\min}^{\dagger} A V_{\min} \St & =
        \begin{bmatrix} A_{11} & A_{12} \\ 0 & A_{22} \end{bmatrix}, &
        \St^{-1} V_{\min}^{\dagger} B & = 
        \begin{bmatrix} B_{1} \\ 0  \end{bmatrix},
    \end{aligned}
  \end{align}
  and
  \begin{align}
    \begin{aligned}
      K V_{\min} \St = K_{\min} \St & =
        \begin{bmatrix} \Kt_{1} & \Kt_{2} \end{bmatrix}
        \label{eq:THM31Auxiliary2}
    \end{aligned}
  \end{align}
  holds, with $A_{11}$, $A_{12}$, $A_{22}$ and $B_{1}$
  from~\cref{eqn:extcontform}. 
  Note that~\cref{eq:THM31Auxiliary2} connects the blocks $\Kt_{1}$ and
  $\Kt_{2}$ of the transformed $K$ defined in~\cref{eq:TH31Auxiliary3} to
  $K_{\min}$.
  Since the blocks $A_{11}$ and $A_{22}$ in~\cref{eqn:minblock} are the same as
  in~\cref{eqn:extcontform}, their eigenvalues are part of the spectrum of the
  $A$ matrix in~\cref{eqn:AB}. As consequence, $V_{\min}$ spans the same space
  as the eigenvectors of $A$ corresponding to the eigenvalues of the blocks
  $A_{11}$ and $A_{22}$; cf.~deflation in the Arnoldi process described
  in~\cite[Eq.~(10.5.2)]{GolV13}.
  Consider for the sake of the argument~\cref{eqn:AB} to be discrete in time,
  then it holds
  \begin{align*}
    x(t + 1) = A x(t) + B u(t) = A V_{\min} x_{\min}(t) + B u(t)
      = V_{\min} A_{\min} x_{\min}(t) + B u(t),
  \end{align*}
  where the last equality holds because $V_{\min}$ spans the same space as
  eigenvectors of $A$,   which leads to
  \begin{align} \label{eq:ProofThm31Aux4}
    \begin{aligned}
      x_{\min}(t + 1) = V_{\min}^{\dagger} x(t + 1)
        & = A_{\min} x_{\min}(t) + V_{\min}^{\dagger} B u(t)\\
      & = A_{\min} x_{\min}(t) + B_{\min} u(t),
    \end{aligned}
  \end{align}
  for all left inverses $V_{\min}^{\dagger}$ of $V_{\min}$. Now consider a
  different left inverse $\bar{V}_{\min}^{\dagger}$ of $V$, which leads to
  \begin{align} \label{eq:ProofThm31Aux5}
    \begin{aligned}
      x_{\min}(t + 1)  = \bar{V}_{\min}^{\dagger} x(t + 1)
        & = A_{\min} x_{\min}(t) + \bar{V}_{\min}^{\dagger} B u(t)\\
      & = A_{\min} x_{\min}(t) + \bar{B}_{\min} u(t),
    \end{aligned}
  \end{align}
  and thus subtracting~\cref{eq:ProofThm31Aux4} from~\cref{eq:ProofThm31Aux5}
  leads to $B_{\min}u(t) - \bar{B}_{\min}u(t) = 0$.
  Therefore, $(A_{\min}, B_{\min})$ is, in fact, independent of the
  left inverse.
  The same line of arguments holds in the continuous-time case.  Since
  $K_{\min}$ is stabilizing for $(A_{\min}, B_{\min})$, $\Kt_{1}$
  is such that $A_{11} + B_{1} \Kt_{1}$ in~\cref{eq:TH31:HelperAux1} is
  asymptotically stable.
  It follows from above that $K$ must be a stabilizing feedback for the
  underlying system when applied to the state of~\cref{eqn:AB}.
  
  Case 2 with $\nr > \nmin$:
  From \Cref{lmm:minsys}, we know that for all $V \in \R^{\nh \times \nr}$
  that satisfy~\cref{eqn:CInf:IntDim}, the corresponding subspaces satisfy
  $\mspan(V_{\min}) \subseteq \mspan(V)$.
  Therefore, there must be a transformation $\St_{2} \in \R^{\nr \times \nr}$
  such that $V = \begin{bmatrix} V_{\min} & \Vt \end{bmatrix} \St_{2}^{-1}$,
  with an auxiliary basis matrix $\Vt$,
  and for all left inverses it holds that
  \begin{align*}
    V^{\dagger} & = \St_{2} \begin{bmatrix} V_{\min}^{\dagger} \\
      \Vt^{\dagger} \end{bmatrix}.
  \end{align*}
  It follows that any $\nr$-dimensional state-space model $(\Ar, \Br)$
  associated with the choice of $V$ and $V^{\dagger}$ can be transformed such
  that
  \begin{align*}
    \begin{aligned}
      \Ar & = \begin{bmatrix} A_{\min} & \At_{12} \\ 0 & \At_{22} \end{bmatrix},
        & \Br & = \begin{bmatrix} B_{\min} \\ 0  \end{bmatrix},
    \end{aligned}
  \end{align*}
  where $A_{\min}$ and $B_{\min}$ are the matrices from the minimal state-space
  model~\cref{eqn:minrel}, and $\At_{12}$ and $\At_{22}$ are auxiliary matrices
  depending on $\Vt$.
  Due to~\cref{eqn:AB} being a stabilizable model, we can choose $\Vt$
  such that $\At_{22}$ is asymptotically stable.
  Thus, there exists a feedback $\Kr$ that stabilizes the state-space model
  and, consequently, the system if applied to the low-dimensional states
  $\xr(t)$.
  In particular, via the same transformation that has been used for
  $(\Ar, \Br)$ it holds that $$\Kr = \begin{bmatrix} K_{\min} & \Kt_{2}
  \end{bmatrix},$$ where $K_{\min}$ must be stabilizing for the minimal
  state-space model~\cref{eqn:minrel} determined only by $V_{\min}$ via
  truncation from the $\nh$-dimensional model $(A,B)$ defined in~\cref{eqn:AB}.
  From~\cref{eq:THM31Auxiliary2},
  it follows that $K = \Kr V^{\dagger}$ stabilizes the
  system if applied to $x(t)$ independent of the choice of $V^{\dagger}$.
\end{proof}

\Cref{thm:lowdimfeed} shows the stabilization of the system via $K$ to be
independent of the chosen $V^{\dagger}$.
In fact, it can be shown that the spectral effects of $K$ only depend on $\Kr$.
This is stated by the following corollary.

\begin{corollary}[Spectrum of closed-loop matrices]%
  \label{cor:spectrum}
  Given the same assumptions as in \Cref{thm:lowdimfeed}, let $(A, B)$
  be a state-space model for $x(t)$ and $K = \Kr V^{\dagger}$ a
  stabilizing controller.
  Then, the spectrum of $A + B K$ is the same for all left inverses
  $V^{\dagger}$.
\end{corollary}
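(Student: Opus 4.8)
The plan is to produce a single change of basis, built only from $(A,B)$ and the fixed minimal subspace $\Vcal_{\min}$ and \emph{not} from $V^{\dagger}$, that brings $A + BK = A + B\Kr V^{\dagger}$ into block upper triangular form, and then to check that the spectrum of each diagonal block is independent of the choice of left inverse. Before conjugating, I would record two structural facts that hold for every left inverse. First, since $\Xcal_{0}$ is a subspace we have $0 \in \Xcal_{0}$, so each state $x(1) = B u(0)$ is a valid trajectory state; hence $\mathrm{range}(B)$ lies in the minimal subspace $\Vcal_{\min} = \mspan(V_{\min})$ of \Cref{lmm:minsys}. Second, $\Vcal_{\min}$ is $A$-invariant: it is the minimal subspace containing all trajectories, which in the coordinates of \cref{eqn:extcontform} is spanned by the directions of the blocks $A_{11}$ and $A_{22}$, exactly as used in the proof of \Cref{thm:lowdimfeed}. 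By \Cref{lmm:minsys} we also have $\Vcal_{\min} \subseteq \Vcal = \mspan(V)$.

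Next I would complete $V_{\min}$ to a basis, writing $P = \begin{bmatrix} V_{\min} & W \end{bmatrix}$ invertible with $P^{-1} = \begin{bmatrix} V_{\min}^{\dagger} \\ W^{\dagger} \end{bmatrix}$, so that the top block $V_{\min}^{\dagger}$ is a fixed left inverse of $V_{\min}$ determined by $W$ and completely unrelated to the left inverse $V^{\dagger}$ of $V$ appearing in $K$. Conjugating, $A$-invariance of $\Vcal_{\min}$ makes $P^{-1} A P$ block upper triangular with diagonal blocks $A_{\min} = V_{\min}^{\dagger} A V_{\min}$ and $A_{22} = W^{\dagger} A W$, while $\mathrm{range}(BK) \subseteq \mathrm{range}(B) \subseteq \Vcal_{\min}$ forces $P^{-1}(BK)P$ to have a nonzero block only in its first block row. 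Hence $P^{-1}(A+BK)P$ is block upper triangular, so the spectrum of $A+BK$ is the union of the spectra of its two diagonal blocks $A_{\min} + V_{\min}^{\dagger} B K V_{\min}$ and $A_{22}$. The block $A_{22}$ represents the action of $A$ on the quotient $\R^{\nh}/\Vcal_{\min}$ (the eigenvalues of $A_{33}$ in \cref{eqn:extcontform}), so its spectrum is fixed by $(A,B)$ and $\Vcal_{\min}$ alone and is untouched by $V^{\dagger}$.

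It then remains to show the top block does not depend on $V^{\dagger}$. I would write $V_{\min}^{\dagger} B K V_{\min} = (V_{\min}^{\dagger} B)\,\Kr\,(V^{\dagger} V_{\min})$ and treat the two factors separately: using $\mathrm{range}(B) \subseteq \Vcal_{\min}$ gives $B = V_{\min} B_{\min}$, so $V_{\min}^{\dagger} B = B_{\min}$ is the fixed reduced input matrix of \cref{eqn:minrel}; and using $\Vcal_{\min} \subseteq \Vcal$ together with $V$ having full column rank gives a unique $R$ with $V_{\min} = V R$, whence $V^{\dagger} V_{\min} = (V^{\dagger} V) R = R$ is independent of $V^{\dagger}$. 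Thus the top diagonal block equals $A_{\min} + B_{\min}\Kr R$, a matrix that does not depend on $V^{\dagger}$, and neither does its spectrum. Combining the two blocks yields the claim.

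The main obstacle, and the reason a naive argument fails, is that for $\nr > \nmin$ the subspace $\Vcal = \mspan(V)$ need not be $A$-invariant, so conjugating with a basis of $\Vcal$ would \emph{not} triangularize $A$ and one could not separate the spectrum cleanly. The resolution is to triangularize along the genuinely $A$-invariant subspace $\Vcal_{\min}$ rather than along $\Vcal$: the freedom in $V^{\dagger}$ then only perturbs the strictly upper (off-diagonal) block of $P^{-1}(A+BK)P$, which leaves the spectrum unchanged, while the two diagonal blocks are pinned down by the quantities $A_{\min}$, $B_{\min}$, $R$, and $A_{22}$ that are manifestly independent of $V^{\dagger}$.
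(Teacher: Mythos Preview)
Your proof is correct and follows essentially the same approach as the paper: block-triangularize along an $A$-invariant subspace containing $\mathrm{range}(B)$ so that $BK$ only perturbs the top block row, then check that the diagonal blocks do not depend on $V^{\dagger}$. The paper's proof is a terse two-line reference back to the three-block extended controllability form~\cref{eqn:extcontform} and the computations in the proof of \Cref{thm:lowdimfeed}, whereas you spell out a self-contained two-block decomposition along $\Vcal_{\min}$ and make the key identities $\mathrm{range}(B)\subseteq\Vcal_{\min}$, $A\Vcal_{\min}\subseteq\Vcal_{\min}$, and $V^{\dagger}V_{\min}=R$ explicit; this is more detailed but not a genuinely different argument.
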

\begin{proof}[Proof.]
  The result follows directly from the use of~\cref{eqn:extcontform} in
  the proof of \Cref{thm:lowdimfeed}.
  Only the spectrum corresponding to the controllable system part can be
  influenced by the feedback $K$ or $\Kr$, respectively.
  The freedom of choosing $V^{\dagger}$ only influences the realization of the
  order-$\nh$ feedback matrix $K$, which does not result in any changes to the
  spectrum of the closed-loop matrix.
\end{proof}

While the effect of $K$ on the spectrum of the underlying closed-loop
matrix is uniquely determined by $\Kr$ independent of $V^{\dagger}$, the
realizations of $K$ as well as $\Kr$ depend on the choice of $V$ and
$V^{\dagger}$.
An advantageous choice for $V^{\dagger}$ is the Moore-Penrose inverse
$V^{+}$ of $V$ due to its simplicity of computation.
Numerically, it is often advantageous to choose $V$ with orthonormal columns
due to the numerical properties of its optimal condition number.
In this case, it holds that $V^{+} = V^{\trans}$.


\subsubsection{Inferring low-dimensional controllers}

We now show that if a space with basis $V$ and dimension $\nr$ exists such
that~\cref{eqn:CInf:IntDim} holds, then there exist $\dT = \nr$ states
that are sufficient to find a stabilizing controller even if the states are
observed in representations of higher dimension $\nh > \nr$.

First, consider the rank conditions for system identification and data
informativity in \Cref{prp:sysident,prp:constfeed,cor:constfeed}, respectively.
From the previous section we know that the full-rank conditions cannot be
satisfied for data triplets $\datatrip$ sampled from state-space models with
$\nh > \nr$.
This can be seen directly in~\cref{eqn:extcontform} because states
corresponding to the $A_{33}$ block are constant over time and thus lead to a
lower rank than $\nh$.
Information about the state-space model from observed states can only be
obtained for the first two block rows and columns in~\cref{eqn:extcontform},
which are associated with controllability and the effect of the initial
conditions.
Therefore, there exist many non-stabilizable state-space models
of dimension $\nh > \nr$ that explain the data in the sense
of~\cref{eq:prelim:explainData} and that describe the same underlying
system.
However, in this work, we are interested in the construction of controllers that
stabilize the underlying system.
This is independent of the used state-space models, i.e., we can restrict the
set of state-space models that explain the data $\Sigma_{\rm{i/s}}$ to
those with dynamics that evolve in a common low-dimensional subspace spanned by the
columns of $V \in \R^{\nh \times \nr}$.
To this end, we introduce the following set of state-space models
that explain the data triplet $\datatrip$, have low-dimensional representations
with~\cref{eqn:CInf:IntDim} for a fixed basis matrix $V$ and are stable in the
components that do not contribute to the system dynamics:
\begin{align} \label{eqn:lowdimmodels}
  \begin{aligned}
    \Sigma_{\rm{i/s}}^{\rm{s}}(V) & := \Sigma_{\rm{i/s}} \cap
      \left\{ (A, B) \left\lvert~ \exists (\Ar, \Br) ~\text{which
      satisfy~\cref{eqn:CInf:IntDim} with}~V \right. \right\}\\
    & \phantom{{}:={}\Sigma_{\rm{i/s}}} \cap
      \left\{ (A, B) \left\lvert~ V_{\perp}^{\dagger} A V_{\perp} ~\text{is stable}
      \right. \right\}\,.
  \end{aligned}
\end{align}
The columns of the basis matrix $V_{\perp}$ span the orthogonal complement of the
space spanned by the columns of $V$ such that
$\mspan\left(\begin{bmatrix} V & V_{\perp}\end{bmatrix} \right) =
\R^{\nh}$, $V^{\trans} V_{\perp} = 0$ and $V_{\perp}^{\trans} V = 0$.
The first intersection in~\cref{eqn:lowdimmodels} ensures that
$\Sigma_{\rm{i/s}}^{\rm{s}}(V)$ contains only those models of
$\Sigma_{\rm{i/s}}$ that have dynamics evolving in the same subspace spanned by the
columns of $V$.
In the case of low-dimensional systems, i.e., $\nr < \nh$, this means there are
components of the models that describe zero-dimensional dynamics and do not
contribute to the system dynamics; cf. the third
block row in~\cref{eqn:extcontform}.
These components can be described by models with system matrices with arbitrary
spectrum but the eigenvalues of the corresponding block $A_{33}$
in~\cref{eqn:extcontform} do not play a role for the stabilization of the underlying
system dynamics.
This motivates the second intersection, which filters out models with unstable
components that do not contribute to the system dynamics.
Also, we will work in the following with reduced data triplets $\datatripr$ that
have potentially a smaller state-space dimension than $\datatrip$.
For notational convenience, we extend the existing notation of the sets of 
state-space models used so far by the following:
\begin{align*}
  \Sigmar_{\rm{i/s}} & := \left\{ (\Ar, \Br) \left\lvert~ \Xr_{+} =
    \Ar \Xr_{-} + \Br U_{-} \right.\right\},\\
  \Sigmar_{\Kr} & := \left\{ (\Ar, \Br) \left\lvert~ \Ar + \Br \Kr
    \text{ is asymptotically stable} \right. \right\}.
\end{align*}

\begin{theorem}[Data informativity for low-dimensional feedback]%
  \label{thm:lowdatainform}
  Let $\datatrip$ be a data triplet sampled from a state-space model of
  dimension $\nh$ for which~\cref{eqn:CInf:IntDim} holds with $V \in
  \R^{\nh \times \nr}$.
  There exists a controller $K$ such that $\Sigma_{\rm{i/s}}^{\rm{s}}(V) \subseteq
  \Sigma_{K}$ if and only if  the data triplet $\datatripr$ is informative for
  stabilization by feedback, i.e., $\Sigmar_{\rm{i/s}} \subseteq \Sigmar_{\Kr}$,
  where $X_{-} = V \Xr_{-}$ and $X_{+} = V \Xr_{+}$.
  A stabilizing high-dimensional controller is then given by
  $K = \Kr V^{\dagger}$ for all left inverses $V^{\dagger}$ of $V$.
\end{theorem}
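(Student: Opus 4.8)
The plan is to prove both implications through a single change of coordinates that exposes the models in $\Sigma_{\rm{i/s}}^{\rm{s}}(V)$ as block upper-triangular matrices. Introduce the invertible matrix $T = \begin{bmatrix} V & V_{\perp}\end{bmatrix} \in \R^{\nh\times\nh}$. First I would record what membership in $\Sigma_{\rm{i/s}}^{\rm{s}}(V)$ means in these coordinates: the first intersection in~\cref{eqn:lowdimmodels} forces every admissible $(A, B)$ to satisfy $A V = V \Ar$ and $B = V \Br$ with $\Ar = V^{\dagger} A V$ and $\Br = V^{\dagger} B$, because all trajectories must remain in $\mspan(V)$; the second intersection forces the complementary block $A_{\perp} = V_{\perp}^{\dagger} A V_{\perp}$ to be stable. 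Consequently $T^{-1} A T$ is block upper-triangular with diagonal blocks $\Ar$ and $A_{\perp}$, and $T^{-1}B = \begin{bmatrix} \Br^{\trans} & 0\end{bmatrix}^{\trans}$. The first routine check is that this $(\Ar, \Br)$ lies in $\Sigmar_{\rm{i/s}}$: applying $V^{\dagger}$ on the left of $X_{+} = A X_{-} + B U_{-}$ and using $X_{-} = V\Xr_{-}$, $X_{+} = V\Xr_{+}$ together with $V^{\dagger} V = I$ yields $\Xr_{+} = \Ar \Xr_{-} + \Br U_{-}$.

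For the direction assuming reduced informativity, I would take $K = \Kr V^{\dagger}$ and compute the closed loop of an arbitrary $(A,B) \in \Sigma_{\rm{i/s}}^{\rm{s}}(V)$ in the $T$-coordinates. Since $V^{\dagger}V = I$, the product $T^{-1} B K T$ has only a nonzero top block row, so $T^{-1}(A + BK)T$ remains block upper-triangular with diagonal blocks $\Ar + \Br\Kr$ and $A_{\perp}$. The block $\Ar + \Br\Kr$ is stable because $(\Ar, \Br)\in\Sigmar_{\rm{i/s}}\subseteq\Sigmar_{\Kr}$, and $A_{\perp}$ is stable by the second intersection; hence $A + BK$ is stable and $(A,B)\in\Sigma_{K}$. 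Crucially, a different left inverse only changes the off-diagonal block via the factor $V^{\dagger}V_{\perp}$, leaving the two diagonal blocks and therefore the spectrum untouched, which is exactly the statement that $K = \Kr V^{\dagger}$ works for every left inverse (and is consistent with \Cref{thm:lowdimfeed} and \Cref{cor:spectrum}).

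For the converse, I would set $\Kr = KV$ and, for each $(\Ar, \Br)\in\Sigmar_{\rm{i/s}}$, lift it to the block-diagonal model $A = T\,\mathrm{diag}(\Ar, A_{\perp})\,T^{-1}$, $B = V\Br$, where $A_{\perp}$ is any fixed stable matrix of size $\nh - \nr$ (e.g.\ $A_{\perp} = 0$ in discrete time). A short computation using $T^{-1}V = \begin{bmatrix} I & 0\end{bmatrix}^{\trans}$ shows this lift explains the data and meets both intersection conditions, so it belongs to $\Sigma_{\rm{i/s}}^{\rm{s}}(V)$; the hypothesis then makes $A + BK$ stable. Reading off the block upper-triangular closed loop gives that its top-left block $\Ar + \Br\Kr$ is stable, and since $\Kr = KV$ is independent of the chosen model, this proves $\Sigmar_{\rm{i/s}}\subseteq\Sigmar_{\Kr}$, i.e.\ that $\datatripr$ is informative for stabilization.

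The main obstacle I anticipate is the first step, namely justifying rigorously that the two intersections in~\cref{eqn:lowdimmodels} are equivalent to the clean block-triangular normal form with $A V = V\Ar$, $B = V\Br$, and stable $A_{\perp}$, and that $\Ar = V^{\dagger}AV$, $\Br = V^{\dagger}B$ are independent of the chosen left inverse. Everything afterward is the block-triangular eigenvalue argument already used in \Cref{thm:lowdimfeed}. Once the normal form is in place, both implications reduce to the observation that a block upper-triangular matrix is stable if and only if its diagonal blocks are, applied to $\Ar + \Br\Kr$ together with the fixed stable complement $A_{\perp}$.
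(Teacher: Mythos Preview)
Your overall architecture matches the paper's: both proofs pass to the coordinates $T=\begin{bmatrix}V & V_{\perp}\end{bmatrix}$, read off the block upper-triangular closed loop, and run the two implications exactly as you describe (the paper invokes \Cref{thm:lowdimfeed} for the forward direction, which is precisely your direct computation, and lifts reduced models to high-dimensional ones for the converse with $\Kr = KV$).

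The gap is in the step you yourself flag as the obstacle. You plan to \emph{prove} that the two intersections in~\cref{eqn:lowdimmodels} force $AV = V\Ar$ and $B = V\Br$. They do not, in general. Condition~\cref{eqn:CInf:IntDim} only says that all trajectories from $\Xcal_{0}$ stay in $\mspan(V)$; if the minimal dimension satisfies $\nmin < \nr$, the reachable set is a proper subspace of $\mspan(V)$ and there is no reason $A$ must map the remaining directions of $\mspan(V)$ back into $\mspan(V)$. So the clean normal form you want simply fails for some models in $\Sigma_{\rm{i/s}}^{\rm{s}}(V)$, and your argument cannot proceed by establishing it.

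The paper's resolution is not to prove the normal form but to \emph{dispose of} the bad case first. If some $(A,B)\in\Sigma_{\rm{i/s}}^{\rm{s}}(V)$ has $\mspan(V)$ not $A$-invariant, then (i) the slack directions inside $\mspan(V)$ correspond to an $A_{33}$-block as in~\cref{eqn:extcontform} whose spectrum is unconstrained by the stability requirement on $V_{\perp}^{\dagger}AV_{\perp}$, so $\Sigma_{\rm{i/s}}^{\rm{s}}(V)$ contains unstabilizable models and no $K$ can work; and (ii) since $\nmin<\nr$, the reduced matrix $\Xr_{-}$ cannot have full row rank $\nr$, so $\datatripr$ is not informative either. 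Both sides of the equivalence are false, and the theorem holds vacuously in this regime. Only after this case split does the paper assume the block-triangular form~\cref{eqn:lowdatainformTmp1} and run the argument you outlined. Your plan becomes correct once you replace ``prove the normal form always holds'' by this preliminary case analysis.
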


If \Cref{thm:lowdatainform} applies, the construction of a
$\Kr$ follows from using \Cref{prp:sysident,prp:constfeed}
or \Cref{cor:constfeed} for the reduced data triplet $\datatripr$.
The order $\nh$ feedback $K$ is then directly given by
\Cref{thm:lowdatainform}.
Note the difference of \Cref{thm:lowdatainform} to the original data
informativity approach from~\cite{VanETetal20}:
It is not necessarily possible to construct a stabilizing $K$ for all
state-space models in $\Sigma_{\rm{i/s}}$ because it might contain
unstabilizable models due to the non-uniqueness of $\nh$-dimensional models
describing $\nr$-dimensional systems, which prevents the direct
application of \Cref{prp:sysident,prp:constfeed} or
\Cref{cor:constfeed} to $\datatrip$.
Therefore, the additional layer of low-dimensional data and corresponding
state-space models is necessary.

\begin{proof}[Proof of \Cref{thm:lowdatainform}.]
  First, assume that $(A, B) \in \Sigma_{\rm{i/s}}^{\rm{s}}(V)$ is a
  model for which $V$ is not a basis matrix to a left eigenspace of $A$.
  Since~\cref{eqn:CInf:IntDim} holds, we know from \Cref{lmm:minsys} that
  the space spanned by $V$ contains a minimal subspace of dimension $\nmin < \nr$,
  which is a left eigenspace of $A$, such that the dynamics of $(A, B)$ evolve in a
  lower-dimensional subspace.
  The mismatch of this minimal subspace and the one spanned by the columns of $V$ is
  not covered by $V_{\perp}$, i.e., $(A, B)$ has components that do not contribute
  to the dynamics and that are described by a block $A_{33}$
  in~\cref{eqn:extcontform} with arbitrary spectrum.
  Therefore, $(A, B) \in \Sigma_{\rm{i/s}}^{\rm{s}}(V)$ can be chosen unstabilizable
  and, vice versa, due to the corresponding system having dimension
  $\nmin < \nr$, the data matrix $\Xr_{-}$ cannot have full row rank $\nr$ and the
  triplet $\datatripr$ is not informative for stabilization.
  Thus, if $V$ is not a basis matrix of an eigenspace for all $A$
  in $(A, B) \in \Sigma_{\rm{i/s}}^{\rm{s}}(V)$, then there is no $K$ that
  stabilizes all models in $\Sigma_{\rm{i/s}}^{\rm{s}}(V)$ and
  the data triplet $\datatripr$ is not informative for stabilization.

  Now, assume that $V$ is a left eigenbasis matrix for all models
  $(A, B) \in \Sigma_{\rm{i/s}}^{\rm{s}}(V)$.
  With $V_{\perp}$, it holds that
  \begin{align} \label{eqn:lowdatainformTmp1}
    \begin{aligned}
      \begin{bmatrix} V & V_{\perp} \end{bmatrix}^{-1} A
        \begin{bmatrix} V & V_{\perp} \end{bmatrix}
        & = \begin{bmatrix} \Ar & \At \\ 0 & A_{\rm{V}_{\perp}}
        \end{bmatrix}, & 
      \begin{bmatrix} V & V_{\perp} \end{bmatrix}^{-1} B
        & = \begin{bmatrix} \Br \\ 0 \end{bmatrix}, \\
      \begin{bmatrix} V & V_{\perp} \end{bmatrix}^{-1} X_{0}
        & = \begin{bmatrix} \Xr \\ 0 \end{bmatrix},
    \end{aligned}
  \end{align}
  where $X_{0}$ is a basis matrix of the subspace of the initial conditions
  $\Xcal_{0}$.
  The construction of~\cref{eqn:lowdatainformTmp1} follows the use
  of~\cref{eqn:CInf:IntDim} and~\cref{eqn:extcontform}.
  By definition~\cref{eqn:lowdimmodels}, $A_{\rm{V}_{\perp}}$ is stable
  and $(\Ar, \Br) \in \Sigmar_{\rm{i/s}}$.
  The rest of the proof is split into the two implications of the theorem and
  we have without loss of generality that $V$ is a left eigenspace for all $A$ in
  $(A, B) \in \Sigma_{\rm{i/s}}^{\rm{s}}(V)$.
  
  Case~1: Assume the reduced data triplet $\datatripr$ is
  informative for stabilization by feedback.
  Let $\Kr$ be a controller for which $\Sigmar_{\rm{i/s}} \subseteq
  \Sigmar_{\Kr}$ holds.
  Consequently, all $(\Ar, \Br) \in \Sigmar_{\rm{i/s}}$ are stabilizable and,
  with~\cref{eqn:lowdatainformTmp1}, also all $(A, B) \in
  \Sigma_{\rm{i/s}}^{\rm{s}}(V)$ are stabilizable.
  From \Cref{thm:lowdimfeed}, it holds that $K = \Kr V^{\dagger}$ is stabilizing for
  all $(A, B) \in \Sigma_{\rm{i/s}}^{\rm{s}}(V)$ such that
  $\Sigma_{\rm{i/s}}^{\rm{s}}(V) \subseteq \Sigma_{K}$ holds.
  
  Case~2: Assume there exists a $K$ such that $\Sigma_{\rm{i/s}}^{\rm{s}}(V)
  \subseteq \Sigma_{K}$.
  In~\cref{eqn:lowdatainformTmp1} we see that $\Ar + \Br K V$ must then be stable.
  It is left to show that for all $(\Ar, \Br) \in \Sigmar_{\rm{i/s}}$ there exists
  an $(A, B) \in \Sigma_{\rm{i/s}}^{\rm{s}}(V)$ such
  that~\cref{eqn:lowdatainformTmp1} holds, because then $\Sigmar_{\rm{i/s}}
  \subseteq \Sigmar_{KV}$.
  For all $(\Ar, \Br) \in \Sigmar_{\rm{i/s}}$ we have that
  \begin{align*}
    \Xr_{+} & = \Ar \Xr_{-} + \Br U_{-}.
  \end{align*}
  In particular, we can choose any stable $A_{\rm{V}_{\perp}}$ and an
  arbitrary $\At$ such that
  \begin{align*}
    \begin{bmatrix} \Xr_{+} \\ 0 \end{bmatrix} & =
      \begin{bmatrix} \Ar & \At \\ 0 & A_{\rm{V}_{\perp}} \end{bmatrix}
      \begin{bmatrix} \Xr_{-} \\ 0 \end{bmatrix} +
      \begin{bmatrix} \Br_{-} \\ 0 \end{bmatrix} U_{-}.
  \end{align*}
  By multiplication with $\begin{bmatrix} V & V_{\perp} \end{bmatrix}$ from
  the left and using~\cref{eqn:lowdatainformTmp1} it holds
  \begin{align*}
    X_{+} & = \begin{bmatrix} V & V_{\perp} \end{bmatrix}
      \begin{bmatrix} \Ar & \At \\ 0 & A_{\rm{V}_{\perp}} \end{bmatrix}
      \begin{bmatrix} \Xr_{-} \\ 0 \end{bmatrix} +
      \begin{bmatrix} V & V_{\perp} \end{bmatrix}
      \begin{bmatrix} \Br_{-} \\ 0 \end{bmatrix} U_{-}\\
    & = A \begin{bmatrix} V & V_{\perp} \end{bmatrix} 
      \begin{bmatrix} \Xr_{-} \\ 0 \end{bmatrix} +
      B U_{-}\\
    & = A X_{-} + B U_{-},
  \end{align*}
  Therefore, $(A, B) \in \Sigma_{\rm{i/s}}^{\rm{s}}(V)$ and thus the data triplet
  $\datatripr$ is informative for stabilization, which concludes the proof.
\end{proof}

With \Cref{thm:lowdatainform}, the number of data samples necessary for
the construction of guaranteed stabilizing controllers becomes dependent on
the reduced dimension $\nr$ rather than the dimension of the large state
space $\nh$.
This is given in the next corollary.

\begin{corollary}[Reduced number of data samples]%
  \label{cor:numsamp}
  If \Cref{thm:lowdatainform} applies, then the minimum number of
  data samples necessary for the construction of a stabilizing feedback
  controller for all underlying systems reduces to $\nr$, even if
  high-dimensional states of dimension $\nh$ are sampled.
  Also, for unique identification of a state-space model of the underlying
  system, the minimum number of necessary data samples reduces to $\nr + \np$.
\end{corollary}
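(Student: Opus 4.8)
The plan is to read off both sample counts directly from the rank conditions collected in \Cref{sec:basics}, after \Cref{thm:lowdatainform} has transported the problem from the high-dimensional triplet $\datatrip$ to the reduced triplet $\datatripr$ of dimension $\nr$. For the first claim, I would start from the equivalence in \Cref{thm:lowdatainform}: constructing a controller $K$ with $\Sigma_{\rm{i/s}}^{\rm{s}}(V) \subseteq \Sigma_{K}$ is equivalent to the reduced triplet $\datatripr$ being informative for stabilization, i.e.\ $\Sigmar_{\rm{i/s}} \subseteq \Sigmar_{\Kr}$, where $X_{-} = V \Xr_{-}$, $X_{+} = V \Xr_{+}$, and a lifted controller is $K = \Kr V^{\dagger}$. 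I then apply the first of the two equivalent statements of \Cref{prp:constfeed} (discrete time) or \Cref{cor:constfeed} (continuous time) to $\datatripr$: informativity requires $\Xr_{-}$ to have full row rank $\nr$. Since $\Xr_{-} \in \R^{\nr \times \dT}$, full row rank forces $\dT \geq \nr$, so at least $\nr$ samples are necessary (note this is consistent with $X_{-} = V \Xr_{-}$ having rank at most $\nr < \nh$, which is exactly why the high-dimensional conditions of \Cref{prp:constfeed} cannot be met).

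Next I would argue that $\dT = \nr$ is attainable, so that $\nr$ is genuinely the minimum and not merely a lower bound. Because the underlying system is stabilizable, its reduced representation is stabilizable as well, and I would invoke the controllability structure exposed in \Cref{lmm:minsys} and the form~\cref{eqn:extcontform} to select inputs and an initial condition $x_{0} \in \Xcal_{0}$ that excite the full $\nr$-dimensional reduced subspace. This yields a reduced trajectory for which $\Xr_{-} \in \R^{\nr \times \nr}$ is invertible and admits a right inverse $\Xr_{-}^{\dagger}$ with $\Xr_{+} \Xr_{-}^{\dagger}$ asymptotically stable, so the first condition of \Cref{prp:constfeed} (resp.\ \Cref{cor:constfeed}) holds with exactly $\dT = \nr$ columns. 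Lifting the resulting $\Kr$ by $K = \Kr V^{\dagger}$ then gives a stabilizing controller from only $\nr$ high-dimensional samples.

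For the identification claim I would apply \Cref{prp:sysident} to the reduced triplet $\datatripr$ in place of $\datatrip$: a reduced state-space model $(\Ar, \Br)$ is uniquely identifiable if and only if $\rank\!\left(\begin{bmatrix} \Xr_{-} \\ U_{-} \end{bmatrix}\right) = \nr + \np$, which requires $\dT \geq \nr + \np$ columns, and this bound is attainable under the same excitation argument as above. Since $V$ is a fixed full-rank basis for which~\cref{eqn:CInf:IntDim} holds, a high-dimensional realization of the underlying system dynamics is then recovered from $(\Ar, \Br)$ via $V$, so $\nr + \np$ samples are both necessary and sufficient for identification.

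The main obstacle is the achievability (sufficiency) direction in both claims: the necessity bounds are pure dimension counts on $\R^{\nr \times \dT}$, whereas showing that $\nr$ (respectively $\nr + \np$) \emph{linearly independent} reduced states can actually be produced requires the stabilizability/controllability of the reduced system together with a spanning choice of initial condition in $\Xcal_{0}$. Here the decomposition of $\nmin = \nc + \nx$ from \Cref{lmm:minsys} and the block form~\cref{eqn:extcontform} are needed to guarantee persistent excitation of the controllable part and coverage of the initial-condition-driven part, so that $\rank(\Xr_{-}) = \nr$ is reachable; the remainder of the argument is a direct transcription of the cited rank conditions to dimension $\nr$.
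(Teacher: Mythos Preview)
Your proposal is correct and follows essentially the same approach the paper intends: the paper gives no explicit proof for this corollary, treating it as an immediate consequence of \Cref{thm:lowdatainform} together with the rank conditions of \Cref{prp:sysident}, \Cref{prp:constfeed} and \Cref{cor:constfeed} applied to the reduced triplet $\datatripr$ of dimension $\nr$. Your attainability discussion (that $\dT = \nr$, respectively $\dT = \nr + \np$, can actually be realized by suitable excitation) goes beyond what the paper provides, but it is a natural and correct elaboration of the ``reduces to'' claim.
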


The dimension $\nr$ plays an essential role in the use of
\Cref{thm:lowdatainform} for the design of stabilizing controllers as
it appears in the rank conditions for data informativity.
In fact, this dimension can be related to the underlying systems that
are stabilized by the feedback, as the following corollary shows.

\begin{corollary}[Minimality of informative dimension]%
  \label{cor:lowdimmin}
  Given the assumptions of \Cref{thm:lowdatainform}.
  If $\datatripr$ is informative for stabilization by feedback, then the
  corresponding state dimension of all
  systems from which the data can be observed is minimal with $\nr = \nmin$. 
\end{corollary}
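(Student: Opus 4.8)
The plan is to pin down $\nr$ between two bounds that together force $\nr = \nmin$. The lower bound $\nmin \leq \nr$ is already supplied by \Cref{lmm:minsys}, so the entire content of the corollary is the matching upper bound $\nr \leq \nmin$, which I would extract from the full-row-rank requirement that informativity imposes on the reduced data matrix $\Xr_{-}$.

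First I would invoke the characterization of informativity in \Cref{prp:constfeed} (or \Cref{cor:constfeed} in the continuous-time case): if $\datatripr$ is informative for stabilization by feedback, then $\Xr_{-} \in \R^{\nr \times \dT}$ must have full row rank, i.e.\ $\rank(\Xr_{-}) = \nr$. This is the only property of informativity the argument actually needs. Next I would bound $\rank(\Xr_{-})$ from above by $\nmin$. The columns of $X_{-}$ are sampled states $x(t)$, and by the definition of the minimal dimension together with \Cref{lmm:minsys} every such state lies in the minimal subspace $\Vcal_{\min}$ of dimension $\nmin$; hence $\rank(X_{-}) \leq \dim \Vcal_{\min} = \nmin$. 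Because $X_{-} = V \Xr_{-}$ with $V$ of full column rank $\nr$, left multiplication by $V$ is injective and therefore rank-preserving, so $\rank(\Xr_{-}) = \rank(V \Xr_{-}) = \rank(X_{-}) \leq \nmin$.

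Combining the two observations gives $\nr = \rank(\Xr_{-}) \leq \nmin$, and together with $\nmin \leq \nr$ from \Cref{lmm:minsys} this yields $\nr = \nmin$, which is exactly the claim. The argument is short and I expect no serious obstacle; the two points I would state carefully are (i) that every sampled column of $X_{-}$ genuinely lives in the $\nmin$-dimensional minimal subspace, so that $\rank(X_{-}) \leq \nmin$ — this is where \Cref{lmm:minsys} and the definition of $\nmin$ enter — and (ii) the rank identity $\rank(V \Xr_{-}) = \rank(\Xr_{-})$, which relies on $V$ having full column rank and hence being injective. Once these are in place, the chain of inequalities closes immediately and applies verbatim in both the discrete- and continuous-time settings.
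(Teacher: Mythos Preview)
Your proposal is correct and follows essentially the same approach as the paper: both deduce from \Cref{prp:constfeed}/\Cref{cor:constfeed} that informativity forces $\Xr_{-}$ to have full row rank $\nr$, bound this rank above by $\nmin$, and close with the lower bound $\nmin \leq \nr$ from \Cref{lmm:minsys}. The only cosmetic difference is that the paper obtains $\rank(\Xr_{-}) \leq \nmin$ by appealing directly to the block structure~\cref{eqn:extcontform}, whereas you argue more cleanly via $\rank(\Xr_{-}) = \rank(V\Xr_{-}) = \rank(X_{-}) \leq \dim\Vcal_{\min}$.
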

\begin{proof}
  In the conditions for data informativity in
  \Cref{prp:constfeed,cor:constfeed}, we see that
  the given data matrix $\Xr_{-}$ must have full row rank.
  From~\cref{eqn:extcontform}, we know that uncontrollable parts with zero
  initial conditions do not contribute to the rank of generated data, i.e.,
  the data is full rank if and only if $\nr = \nc + \nx = \nmin$.
  Using \Cref{lmm:minsys} together with
  assumption~\cref{eqn:CInf:IntDim} in \Cref{thm:lowdatainform} gives
  the result.
\end{proof}


\subsection{Controller inference for stabilizing approximately low-dimensional
  systems}%
\label{subsec:apprdata}

In this section, we consider the case where an $\nrr$-dimensional
subspace $\Vcalt \subset \R^{\nh}$ with a basis matrix $\Vt \in
\R^{\nh \times \nrr}$ exists such that the high-dimensional trajectories
that are sampled from the system are well but not exactly represented in~$\Vcalt$:
\begin{align} \label{eqn:approxlowdim}
  \begin{aligned}
    x(t) & \approx \Vt \xt(t), & t \geq 0,
  \end{aligned}
\end{align}
with vectors $\xt(t)$ of dimension $\nrr$.
The vectors $\xt(t)$ are assumed to be states of a low-dimensional state-space
model and, thus, $\xt(t)$ may not be obtained via projection with a left inverse
of the high-dimensional states in general~\cite{Peh20}.

In the following, we argue that in the case of approximately low-dimensional 
systems with condition~\cref{eqn:approxlowdim}, using the
Moore-Penrose inverse $\Vt^{+}$ to lift a low-dimensional controller $\Kt$ can
help to keep the disturbance due to the approximation of $x(t)$ as $\Vt\xt(t)$
low. 
Additionally, we also discuss that it helps to reduce the disturbance in the
feedback if the low-dimensional subspace $\Vcalt$ contains the eigenvectors
corresponding to the unstable eigenvalues of the high-dimensional state-space
model $(A, B)$ from which data $\datatrip$ are sampled.


\subsubsection{Amplification of state approximation errors}

There exists an error vector $x_{\Delta}(t) \in \R^{\nh}$ that closes the gap in approximation~\cref{eqn:approxlowdim}, i.e.,
\begin{align} \label{eqn:distdata}
  \begin{aligned}
    x(t) & = \Vt \xt(t) + x_{\Delta}(t), & \text{for}~t \geq 0.
  \end{aligned}
\end{align}
And vice versa, the low-dimensional state $\xt(t)$ is given by
\begin{align} \label{eqn:disttrunc}
  \begin{aligned}
    \xt(t) & = \Vt^{\dagger}x(t) - \Vt^{\dagger} x_{\Delta}(t), &
      \text{for}~t \geq 0,
  \end{aligned}
\end{align}
for a left inverse $\Vt^{\dagger}$ of $\Vt$.
Equation~\cref{eqn:distdata} states that the reconstruction error of
lifting the low-dimensional state $\xt(t)$ into the high-dimensional space
is given by $x_{\Delta}(t)$, whereas equation~\cref{eqn:disttrunc} states
that the truncation error of approximating $x(t)$ in the reduced space $\Vcalt$
by the low-dimensional state $\xt(t)$ is $-\Vt^{\dagger}x_{\Delta}(t)$.
In particular, if we have a controller $\Kt \in \R^{\np \times \nrr}$
that stabilizes the system corresponding to the low-dimensional states
$\xt(t)$ through feedback 
\begin{align} \label{eqn:LowDimFeedback}
  u(t) & = \Kt \xt(t),
\end{align}
then lifting $\Kt$ gives the feedback
\begin{align*}
  \Kt \Vt^{\dagger} x(t) &
    = \Kt \xt(t) + \Kt \Vt^{\dagger} x_{\Delta}(t)
    = u(t) + \Kt V^{\dagger} x_{\Delta}(t)
\end{align*}
for the high-dimensional states $x(t)$ and $u(t)$ defined
in~\cref{eqn:LowDimFeedback}.
Thus, using the lifted controller $K = \Kt \Vt^{\dagger}$ provides the same
feedback~\cref{eqn:LowDimFeedback} as the reduced state plus the
disturbance $\Kt \Vt^{\dagger}x_{\Delta}(t)$ due to the truncation error.
To understand the performance of the lifted controller $K = \Kt \Vt^{\dagger}$
on the high-dimensional states, we need to understand the effect of
$\Kt \Vt^{\dagger}x_{\Delta}(t)$ on the lifted state.
The first observation is that  $\| \Kt \Vt^{\dagger} x_{\Delta}(t) \|$
should be kept small.
Thus, the Moore-Penrose inverse $\Vt^{+}$ is a good choice because it
minimizes $\| \Vt^{\dagger} x_{\Delta}(t) \|_{2}$ among all possible left
inverses of $V$ and for the unknown errors $x_{\Delta}(t)$.
The second observation follows in the subsequent section.


\subsubsection{Perturbations of the closed-loop spectrum}

A different point to consider is the influence
of the feedback constructed with an approximate subspace on the spectrum of the
state-space model from which data were sampled:
Let $(A, B)$ be a state-space model describing the data triplet
$\datatrip$ with a basis $X_{0}$ of the initial conditions $\Xcal_{0}$.
Under the assumption that no higher-order Jordan blocks are split, there
exists a transformation $S \in \R^{\nh \times \nh}$ such that
\begin{align} \label{eqn:triangdecomp}
  \begin{aligned}
    S^{-1} A S & = \begin{bmatrix} A_{11} & A_{12} \\ 0 & A_{22}
      \end{bmatrix}, &
      S^{-1} B & = \begin{bmatrix} B_{1} \\ B_{2} \end{bmatrix}, &
      S^{-1} X_{0} & = \begin{bmatrix} X_{10} \\ X_{20} \end{bmatrix},
  \end{aligned}
\end{align}
with the matrix blocks $A_{11} \in \R^{\nrr \times \nrr}$,
$A_{12} \in \R^{\nrr \times (\nh - \nrr)}$,
$A_{22} \in \R^{(\nh - \nrr) \times (\nh - \nrr)}$,
$B_{1} \in \R^{\nrr \times \np}$,
$B_{2} \in \R^{(\nh - \nrr) \times \np}$, and the initial conditions
$X_{10} \in \R^{\nrr \times \nq}$ and
$X_{10} \in \R^{(\nh - \nrr) \times \nq}$.
The first block rows and columns in~\cref{eqn:triangdecomp} represent the parts
of the true state-space model, which are approximated
by the model in~\cref{eqn:approxlowdim}.
Due to the assumption that $\xt(t)$ is the state of a linear state-space model,
from~\cref{eqn:triangdecomp}, we can obtain 
\begin{align} \label{eqn:disttriang}
  \begin{aligned}
    \begin{bmatrix} A_{11} & A_{12} \\ 0 & A_{22} \end{bmatrix} &
      = \begin{bmatrix} \At & 0 \\ 0 & 0 \end{bmatrix}
      + \begin{bmatrix} A_{\Delta} & A_{12} \\ 0 & A_{22} \end{bmatrix}, &
      \begin{bmatrix} B_{1} \\ B_{2} \end{bmatrix} &
      = \begin{bmatrix} \Bt \\ 0 \end{bmatrix}
      + \begin{bmatrix} B_{\Delta} \\ B_{2} \end{bmatrix}, \\
    \begin{bmatrix} X_{10} \\ X_{20} \end{bmatrix} &
      = \begin{bmatrix} \Xt \\ 0 \end{bmatrix}
      + \begin{bmatrix} X_{\Delta} \\ X_{20} \end{bmatrix},
  \end{aligned}
\end{align}
where $\At$, $\Bt$ and $\Xt$ define a state-space model for the states in~\cref{eqn:approxlowdim},
and $A_{\Delta}$, $B_{\Delta}$ and $X_{\Delta}$ are appropriate perturbations resulting from the difference to the state-space model of the $\nh$-dimensional states.

Let $\Kt$ be a stabilizing controller constructed for the state-space
model $(\At, \Bt)$.
The full-order closed-loop matrix then reads as
\begin{align} \label{eqn:distclosed}
  \begin{bmatrix} A_{11} & A_{12} \\ 0 & A_{22} \end{bmatrix} +
    \begin{bmatrix} B_{1} \\ B_{2} \end{bmatrix}
    \begin{bmatrix} \Kt & 0 \end{bmatrix}
    & = \begin{bmatrix} \At + \Bt \Kt & A_{12} \\ 0 & A_{22} \end{bmatrix} +
    \begin{bmatrix} A_{\Delta} + B_{\Delta} \Kt & 0 \\ B_{2} \Kt & 0
    \end{bmatrix}.
\end{align}
The sum of the right-hand side in~\cref{eqn:distclosed} separates the main
spectrum and the disturbances.
If the space spanned by the columns of the basis matrix $\Vt$ contains as
subspace the space spanned by the right eigenvectors of $A$ corresponding to the
unstable eigenvalues, then the first term on the right-hand side
of~\cref{eqn:distclosed} is stable by construction of $\Kt$.
This motivates the second observation, namely choosing a space $\Vcalt$ that
contains the right eigenspace of $A$ corresponding to the unstable eigenvalues.

The stability of the closed-loop model~\cref{eqn:distclosed} is disturbed
by $A_{\Delta} + B_{\Delta} \Kt$ and $B_{2} \Kt$.
Let us first consider the disturbance $A_{\Delta} + B_{\Delta} \Kt$:
If $A_{\Delta}$ and $B_{\Delta}$ are small in norm, then this is sufficient for
the disturbance $A_{\Delta} + B_{\Delta} \Kt$ to have little effect on the
spectrum of $\At + \Bt \Kt$.
However, this is not a necessary condition because even if $A_{\Delta}$ and
$B_{\Delta}$ are large in norm, the effect when closing the control loop with
$\Kt$ can be small on the spectrum of $\At + \Bt \Kt$.
Let us now consider the term $B_{2} \Kt$, which introduces disturbances in the
spectrum of~\cref{eqn:distclosed} that are related to un-identified effects of
the controls.
If the approximation is related to sampled data $\datatripr$ and $\nrr$ is
chosen large enough, the norm of $B_{2}$ is typically small compared to
$A_{\Delta} + B_{\Delta} \Kt$ because data are usually collected via
non-zero input signals that excite all controlled components.


\section{Computational procedure for controller inference}%
\label{sec:algorithms}

In this section, we introduce a computational procedure for inferring
stabilizing feedback controllers from data.
The following learning approach is context aware because it learns controllers
directly from data rather than via the detour of system identification.
Thus, the learning takes into account the context of the task of stabilization,
in contrast to the traditional two-step process that first learns a generic
model in ignorance of the actual task of stabilization.
A broader view of the proposed approach through the lens of context-aware
learning is discussed in the conclusions and outlook in
\Cref{sec:conclusions}.


\subsection{Controller inference}

\begin{algorithm}[t]
  \SetAlgoHangIndent{1pt}
  \DontPrintSemicolon
  \caption{Controller inference.}
  \label{alg:infercont}
  
  \KwIn{High-dimensional data triplet $\datatrip$.}
  \KwOut{State-feedback controller $K$.}
  
  Construct a basis matrix $\Vt \in \R^{\nh \times \nrr}$ via the
    singular value decomposition
    \begin{align*}
      \begin{bmatrix} X_{-} & X_{+} \end{bmatrix} & =
        \begin{bmatrix} \Vt & V_{2} \end{bmatrix}
        \begin{bmatrix} \Sigma_{1} & 0 \\ 0 & \Sigma_{2} \end{bmatrix}
        U^{\trans},
    \end{align*}
    where $\Sigma_{1}$ contains the $\nrr$ largest singular values.\;
    
  Compute the reduced data triplet $\datatript$ via
    \begin{align*}
      \begin{aligned}
        \Xt_{-} & = \Vt^{\trans} X_{-} & \text{and} &&
          \Xt_{+} & = \Vt^{\trans} X_{+}.
      \end{aligned}
    \end{align*}
    \vspace{-\baselineskip}\;
    
  Infer a low-dimensional stabilizing feedback $\Kt = U_{-} \Theta
    (\Xt_{-} \Theta)^{-1}$ for $\datatript$ solving either~\cref{eqn:dtlmi}
    or~\cref{eqn:ctlmi} for the unknown $\Theta$.\;
    
  Lift the inferred reduced controller $\Kt$ to the high-dimensional
    space via $K = \Kt \Vt^{\trans}$.\;
\end{algorithm}

An approach building on the theory introduced in \Cref{sec:infercon}
is given in \Cref{alg:infercont}.
In Step~1 of \Cref{alg:infercont}, an orthonormal basis of an
approximation the image of the two concatenated data matrices $X_{-}$ and
$X_{+}$ is computed via the singular value decomposition.
Note that other low-rank matrix approximations such as the pivoted QR
decomposition can be used here as well.
Also note that the dimension $\nrr$ can be chosen based on a suitable energy
measure and the amount of available data samples.

Step~2 computes approximations of the data matrices.
For this step, a left inverse of the basis $\Vt$ is used.
As discussed in previous sections, a suitable choice is the Moore-Penrose
inverse due to its norm minimizing property and stable computability.
Since $\Vt$ is an orthogonal basis matrix in \Cref{alg:infercont}, the
Moore-Penrose inverse is the transpose of $\Vt$.

In Step~3, a reduced stabilizing controller is computed with the reduced data
triplet.
The inference approach in \Cref{prp:constfeed,cor:constfeed}
can be directly applied to the reduced data
triplet $\datatript$ to compute $\Kt$.
This inference step needs at least $\nrr$ data samples.
Notice, however, that Step 3 can be replaced by
controller construction via system identification, if solving the matrix
inequalities is numerically challenging.
If then first a state-space model $(\At, \Bt)$ is learned based on
\Cref{prp:sysident}, then, a stabilizing controller can be designed
for $(\At, \Bt)$ using, for example, pole assignment~\cite{Dat04}, the Bass'
algorithm~\cite{Arm75, ArmR76}, Riccati equations~\cite{LanR95} or partial
stabilization~\cite{BenCQetal00}.
First identifying a reduced state-space model and then constructing a controller
needs at least $\nrr + \np$ data samples based on the proposed approach,
compared to $\nh + \np$; cf. \Cref{prp:sysident}.

Finally, in Step~4, the reduced feedback $\Kt$ is lifted to the full state space
from which the data has been obtained using the same left inverse as for the
truncation of the data, i.e., in our case the transpose of the orthogonal basis
matrix.


\subsection{Data collection via re-projection}

\begin{algorithm}[t]
  \SetAlgoHangIndent{1pt}
  \DontPrintSemicolon
  \caption{Data collection via re-projection.}
  \label{alg:reproj}
  
  \KwIn{Basis matrix $\Vt = \begin{bmatrix} \vt_{1} & \ldots & \vt_{\nrr}
    \end{bmatrix} \in \R^{\nh \times \nrr}$, discretized input signal
    $U_{-} = \begin{bmatrix} u_{1} & \ldots & u_{T} \end{bmatrix} \in
    \R^{\np \times T}$, queryable system
    $F\colon \R^{\nh} \times \R^{\np} \rightarrow \R^{\nh}$.}
  \KwOut{Re-projected data triplet $\datatrip$.}
  
  Initialize $X_{-} = [~]$, $X_{+} = [~]$, $k = 1$.\;
  
  \While{$k \leq T$}{
    \eIf{$k \leq r$}{
      Normalize $\displaystyle x = \frac{\vt_{k}}{\| \vt_{k} \|_{2}}$.\;
    }{
      Compute $\displaystyle \xt = \sum\limits_{j = 1}^{r} \alpha_{j} \vt_{j}$,
        with random coefficients $\alpha_{j}$.\;
        
      Normalize $\displaystyle x = \frac{\xt}{\| \xt \|_{2}}$.\;
    }
    
    Query the system $y = F(x, u_{k})$.\;
    
    Update data matrices $X_{-} = \begin{bmatrix} X_{-} & x \end{bmatrix}$ and
      $X_{+} = \begin{bmatrix} X_{+} & y \end{bmatrix}$.\;
    
    Increment $k \leftarrow k + 1$.\;
  }
\end{algorithm}

At least two numerical issues can arise when using the reduced data triplet
$\datatript$ in \Cref{alg:infercont}.
First, the triplet $\datatript$ may not correspond to a linear time-invariant
system due to the truncation; cf. \Cref{subsec:apprdata}.
Second, the original as well as reduced data can lead to poorly conditioned data
matrices.
A large condition number of the data matrices makes it numerically challenging
to solve the linear matrix inequalities from
\Cref{prp:constfeed,cor:constfeed} for inference.
In \Cref{alg:reproj}, we propose a re-projection scheme to collect data
that heuristically lead to better conditioned data matrix; see~\cite{Peh20} for
details about the re-projection scheme in system identification and
non-intrusive model reduction.
The following re-projection scheme is applicable if the system of interest is
queryable, which means that one can excite the system at feasible inputs and
initial conditions and observe the state trajectory.
\Cref{alg:reproj} can be employed between Steps~1 and~2 of
\Cref{alg:infercont} to generate a re-projected data triplet, which
replaces the original data triplet.

\Cref{alg:reproj} applies a single time step for a state vector from
the approximate reachability subspace $\Vcalt$.
A necessary assumption for this is that the state vector is a feasible initial
condition at which the high-dimensional system can be queried.
For the first $\nrr$ vectors from $\Vcalt$, where $\nrr$ is the dimension of
$\Vcalt$, we can use the columns of the basis matrix $\Vt$.
This means that the first $\nrr$ columns of $X_{-}$ are $\Vt$ such that by
multiplication with its left inverse the first $\nrr$ columns of $\Xt_{-}$
correspond to the $\nrr \times \nrr$ identity matrix.
This is beneficial in terms of conditioning and computational variables for
solvers of linear matrix inequalities needed in
\Cref{prp:constfeed,cor:constfeed}.
The inequalities~\cref{eqn:dtlmi,eqn:ctlmi} cannot be directly
treated by many standard solvers.
They only allow (semi-)definiteness constraints for symmetric or symmetrized
optimization variables (matrices), which does not hold for $\Theta$.
Therefore, we need to introduce the auxiliary variable $Z = X_{-} \Theta$
into~\cref{eqn:dtlmi,eqn:ctlmi}.
This additional layer of variables leads to numerically unsymmetric matrices
$X_{-} \Theta$.
In the case of $X_{-}$ having the identity matrix as a large block, the
multiplication with $\Theta$ can be interpreted as a disturbance of the
optimization variables such that $Z$ and $\Theta$ are in a certain sense
close to each other, which can improve the performance of the solvers.
Additionally, we can expect at least for $X_{-}$ in re-projected form
a lower condition number, which also improves the numerics when solving the
matrix inequalities.

If more than $\nrr$ data samples are needed, then they are generated as linear
combinations of basis vectors of $\Vcalt$ with (normally distributed) random
coefficients so that the random vectors lie in the approximate reachability
subspace $\Vcalt$.
The resulting data triplet $\datatrip$ from \Cref{alg:reproj} is such
that the truncated data triplet $\datatript$ in Step~2 of
\Cref{alg:infercont} is associated to a linear time-invariant system.
Note that this is an assumption of the analysis in
\Cref{subsec:apprdata}.


\section{Numerical examples}%
\label{sec:examples}

In this section, we apply the findings of \Cref{sec:infercon} in terms of
\Cref{alg:infercont,alg:reproj} to design stabilizing
state-feedback controllers for three numerical examples.

We compare the proposed controller inference approach, which
directly learns the controller from data, with stabilizing controllers via
system identification of reduced models.
The controllers are constructed via partial
stabilization~\cite{BenCQetal00} from identified models.
For learning models, we employ
\Cref{prp:sysident} in Step~3 of \Cref{alg:infercont}.
Since we consider in all examples fewer than $\nrr + \np$ data samples, the
identified models and corresponding systems are not unique.
We use the Moore-Penrose inverse in \Cref{prp:sysident} to
compute one specific model for a given data set.
The sampled states are obtained with Gaussian input signals.
If due to instabilities the trajectories diverge to infinity, the data
collection is restarted with a vector from the currently spanned reachability
subspace, i.e., the image of the states observed so far.

\begin{figure}[t]
  \vspace{-\baselineskip}
  \centering
  \tikzexternalenable%
  \tikzsetnextfilename{overview_samples}%
  \begin{tikzpicture}[font = \plotfontsize]
  \begin{axis}[
    width  = .75\textwidth,
    height = .125\textheight,
    scale only axis,
    ybar,
    bar width    = 6pt,
    ymode        = log,
    ymin         = 1,
    ymax         = 8e+03,
    ylabel       = {\#samples},
    ylabel style = {yshift = -.3em},
    xtick             = data,
    symbolic x coords = {synthetic~(DT/CT), heat~flow~(DT), heat~flow~(CT),
      cylinder~wake~(DT)},
    xticklabel style  = {rotate = 0, anchor = north, xshift = 0cm},
    legend columns = 3, 
    legend style   = {
      at     = {(0.5,-0.45)},
      anchor = center,
      /tikz/every even column/.append style = {column sep = 0.25cm}},
    cycle list name = samplelist
  ]
  
    \addplot+[ybar] coordinates{
      (synthetic~(DT/CT), 7)
      (heat~flow~(DT), 4491)
      (heat~flow~(CT), 4491)
      (cylinder~wake~(DT), 6624)
    };
    \addlegendentry{full sys.~id.~($\nh + \np$)}
    
    \addplot+[ybar] coordinates{
      (synthetic~(DT/CT), 5)
      (heat~flow~(DT), 19)
      (heat~flow~(CT), 22)
      (cylinder~wake~(DT), 129)
    };
    \addlegendentry{reduced~sys.~id.~($\nrr + \np$)}
    
    \addplot+[ybar] coordinates{
      (synthetic~(DT/CT), 4)
      (heat~flow~(DT), 18)
      (heat~flow~(CT), 21)
      (cylinder~wake~(DT), 122)
    };
    \addlegendentry{\#samples in experiments ($\dT$)}
    
  \end{axis}
\end{tikzpicture}%
  \tikzexternaldisable%

  \vspace{-.25\baselineskip}
  
  \caption{Number of data samples:
    In all experiments, the number of samples for control
    inference is lower than the number of samples required for learning a
    minimal model of the system.
    Furthermore, the number of samples is orders of magnitude lower than what
    would be required for learning traditional non-minimal models of the same
    dimension as the observed states.  
    This is in agreement with \Cref{thm:lowdatainform}
    and the discussion in \Cref{subsec:apprdata}.}
  \label{fig:overview_samples}
  \vspace{-.75\baselineskip}
\end{figure}
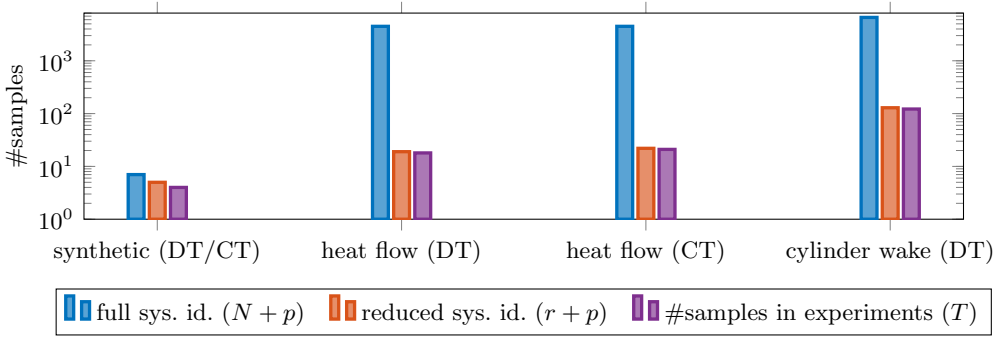

\Cref{fig:overview_samples} provides an overview about the number of data
samples used in the following numerical experiments and how they compare to
traditional two-step approaches that first identify either high- or
low-dimensional models via system identification. 

The experiments have been run on a machine equipped with an Intel(R) Core(TM)
i7-8700 CPU at 3.20GHz and with 16 GB main memory.
The algorithms are implemented in MATLAB 9.9.0.1467703 (R2020b) on CentOS Linux
release 7.9.2009 (Core).
For the solution of linear matrix inequalities, the disciplined convex
programming toolbox CVX version 2.2, build 1148 (62bfcca)~\cite{GraB08, GraB20}
is used together with MOSEK version 9.1.9~\cite{MOS19} as inner optimizer.
For the partial stabilization of identified systems, we use the
implementations of the Bass' algorithm for linear standard systems from the
MORLAB toolbox version~5.0~\cite{BenW21c, BenW19b}.
The code, data and results of the numerical experiments are available
at~\cite{supWer22}.


\subsection{Synthetic example}

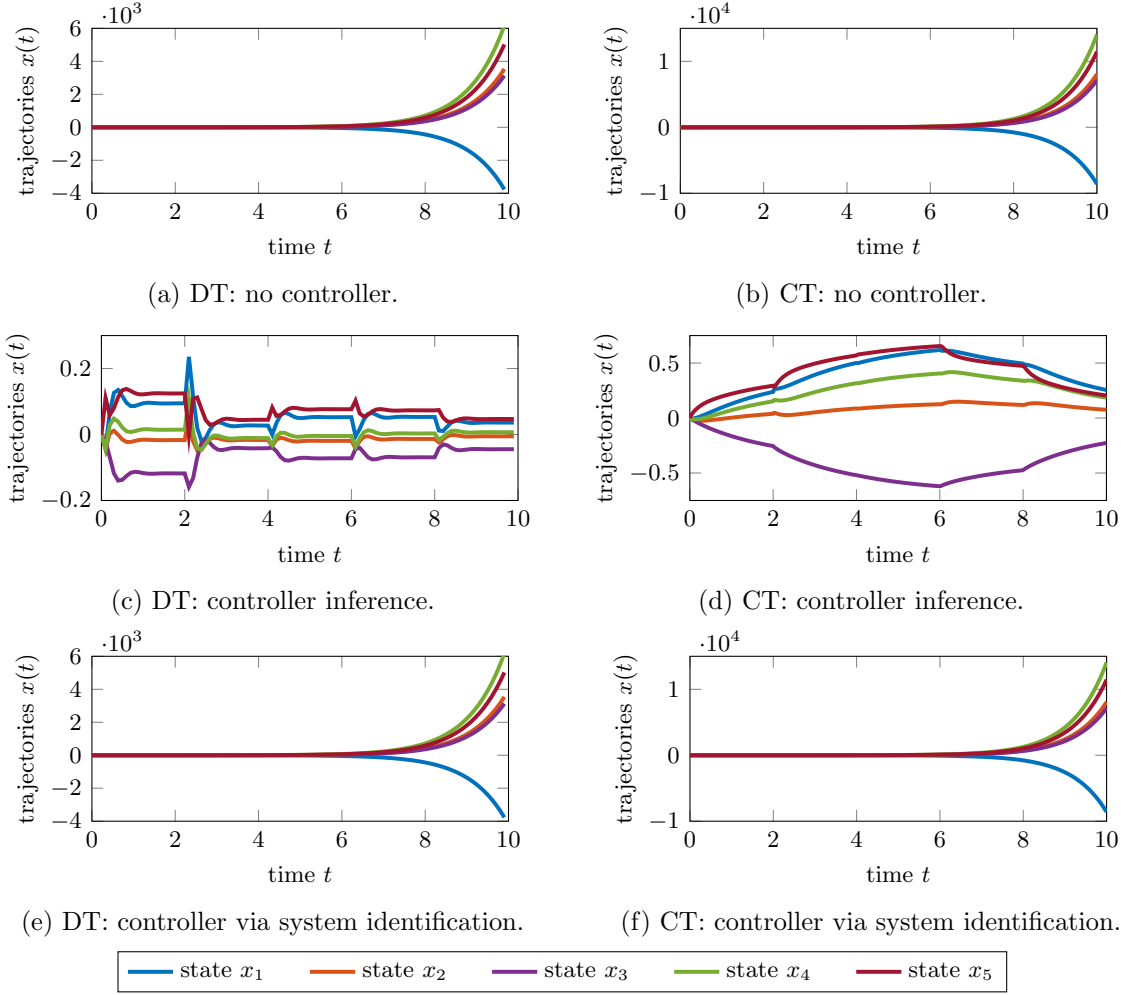
\begin{figure}[t]
  \vspace{-\baselineskip}
  \centering%
  \begin{subfigure}[t]{.49\linewidth}
    \centering
  \tikzexternalenable%
  \tikzsetnextfilename{synthetic_dt_sim_nofb}%
  \begin{tikzpicture}[font = \plotfontsize]
  \pgfplotstableread{graphics/data/synthetic_dt_sim_nofb.dat}\tableSIM
  
  \begin{axis}[%
    name   = states,
    width  = .725\textwidth,
    height = .1\textheight,
    scale only axis,
    xmin = 0,
    xmax = 10,
    ymin = -4000,
    ymax = 6000,
    xminorticks = false,
    yminorticks = false,
    xlabel = {time $t$},
    ylabel = {trajectories $x(t)$},
    ylabel style   = {yshift = -.3em},
    scaled x ticks = false,
    x tick label style = {/pgf/number format/1000 sep={\,}},
    y tick label style = {/pgf/number format/1000 sep={\,}},
    scaled y ticks     = {base 10:-3},
    cycle list name    = stateslist
  ]
  
    \foreach \y in {3, 4, ..., 7}{
      \addplot+ table[x index = 0, y index = \y] {\tableSIM};
    }
  \end{axis}
\end{tikzpicture}%
  \tikzexternaldisable%

    \caption{DT: no controller.}
    \label{fig:synthetic_dt_nofb}
  \end{subfigure}%
  \hfill%
  \begin{subfigure}[t]{.49\linewidth}
    \centering
  \tikzexternalenable%
  \tikzsetnextfilename{synthetic_ct_sim_nofb}%
  \begin{tikzpicture}[font = \plotfontsize]
  \pgfplotstableread{graphics/data/synthetic_ct_sim_nofb.dat}\tableSIM
  
  \begin{axis}[%
    name   = states,
    width  = .725\textwidth,
    height = .1\textheight,
    scale only axis,
    xmin = 0,
    xmax = 10,
    ymin = -1e+04,
    ymax = 1.5e+04,
    xminorticks = false,
    yminorticks = false,
    xlabel = {time $t$},
    ylabel = {trajectories $x(t)$},
    ylabel style   = {yshift = -.3em},
    scaled x ticks = false,
    x tick label style = {/pgf/number format/1000 sep={\,}},
    y tick label style = {/pgf/number format/1000 sep={\,}},
    cycle list name    = stateslist
  ]
  
    \foreach \y in {3, 4, ..., 7}{
      \addplot+ table[x index = 0, y index = \y] {\tableSIM};
    }
  \end{axis}
\end{tikzpicture}%
  \tikzexternaldisable%

    \caption{CT: no controller.}
    \label{fig:synthetic_ct_nofb}
  \end{subfigure}
  \vspace{.25\baselineskip}
  
  \begin{subfigure}[t]{.49\linewidth}
    \centering
  \tikzexternalenable%
  \tikzsetnextfilename{synthetic_dt_sim_inferfb}%
  \begin{tikzpicture}[font = \plotfontsize]
  \pgfplotstableread{graphics/data/synthetic_dt_sim_inferfb.dat}\tableSIM
  
  \begin{axis}[%
    name   = states,
    width  = .725\textwidth,
    height = .1\textheight,
    scale only axis,
    xmin = 0,
    xmax = 10,
    ymin = -0.2,
    ymax = 0.3,
    xminorticks = false,
    yminorticks = false,
    xlabel = {time $t$},
    ylabel = {trajectories $x(t)$},
    ylabel style   = {yshift = -.3em},
    scaled x ticks = false,
    x tick label style = {/pgf/number format/1000 sep={\,}},
    y tick label style = {/pgf/number format/1000 sep={\,}},
    cycle list name    = stateslist
  ]
  
    \foreach \y in {3, 4, ..., 7}{
      \addplot+ table[x index = 0, y index = \y] {\tableSIM};
    }
  \end{axis}
\end{tikzpicture}%
  \tikzexternaldisable%

    \caption{DT: controller inference.}
    \label{fig:synthetic_dt_inferfb}
  \end{subfigure}%
  \hfill%
  \begin{subfigure}[t]{.49\linewidth}
    \centering
  \tikzexternalenable%
  \tikzsetnextfilename{synthetic_ct_sim_inferfb}%
  \begin{tikzpicture}[font = \plotfontsize]
  \pgfplotstableread{graphics/data/synthetic_ct_sim_inferfb.dat}\tableSIM
  
  \begin{axis}[%
    name   = states,
    width  = .725\textwidth,
    height = .1\textheight,
    scale only axis,
    xmin = 0,
    xmax = 10,
    ymin = -0.75,
    ymax = 0.75,
    xminorticks = false,
    yminorticks = false,
    xlabel = {time $t$},
    ylabel = {trajectories $x(t)$},
    ylabel style   = {yshift = -.3em},
    scaled x ticks = false,
    x tick label style = {/pgf/number format/1000 sep={\,}},
    y tick label style = {/pgf/number format/1000 sep={\,}},
    cycle list name    = stateslist
  ]
  
    \foreach \y in {3, 4, ..., 7}{
      \addplot+ table[x index = 0, y index = \y] {\tableSIM};
    }
  \end{axis}
\end{tikzpicture}%
  \tikzexternaldisable%

    \caption{CT: controller inference.}
    \label{fig:synthetic_ct_inferfb}
  \end{subfigure}
  \vspace{.25\baselineskip}
  
  \begin{subfigure}[t]{.49\linewidth}
    \centering
  \tikzexternalenable%
  \tikzsetnextfilename{synthetic_dt_sim_identfb}%
  \begin{tikzpicture}[font = \plotfontsize]
  \pgfplotstableread{graphics/data/synthetic_dt_sim_identfb.dat}\tableSIM
  
  \begin{axis}[%
    name   = states,
    width  = .725\textwidth,
    height = .1\textheight,
    scale only axis,
    xmin = 0,
    xmax = 10,
    ymin = -4000,
    ymax = 6000,
    xminorticks = false,
    yminorticks = false,
    xlabel = {time $t$},
    ylabel = {trajectories $x(t)$},
    ylabel style   = {yshift = -.3em},
    scaled x ticks = false,
    x tick label style = {/pgf/number format/1000 sep={\,}},
    y tick label style = {/pgf/number format/1000 sep={\,}},
    scaled y ticks     = {base 10:-3},
    cycle list name    = stateslist
  ]
  
    \foreach \y in {3, 4, ..., 7}{
      \addplot+ table[x index = 0, y index = \y] {\tableSIM};
    }
  \end{axis}
\end{tikzpicture}%
  \tikzexternaldisable%

    \caption{DT: controller via system identification.}
    \label{fig:synthetic_dt_identfb}
  \end{subfigure}%
  \hfill%
  \begin{subfigure}[t]{.49\linewidth}
    \centering
  \tikzexternalenable%
  \tikzsetnextfilename{synthetic_ct_sim_identfb}%
  \begin{tikzpicture}[font = \plotfontsize]
  \pgfplotstableread{graphics/data/synthetic_ct_sim_identfb.dat}\tableSIM
  
  \begin{axis}[%
    name   = states,
    width  = .725\textwidth,
    height = .1\textheight,
    scale only axis,
    xmin = 0,
    xmax = 10,
    ymin = -1e+04,
    ymax = 1.5e+04,
    xminorticks = false,
    yminorticks = false,
    xlabel = {time $t$},
    ylabel = {trajectories $x(t)$},
    ylabel style   = {yshift = -.3em},
    scaled x ticks = false,
    x tick label style = {/pgf/number format/1000 sep={\,}},
    y tick label style = {/pgf/number format/1000 sep={\,}},
    cycle list name    = stateslist
  ]
  
    \foreach \y in {3, 4, ..., 7}{
      \addplot+ table[x index = 0, y index = \y] {\tableSIM};
    }
  \end{axis}
\end{tikzpicture}%
  \tikzexternaldisable%

    \caption{CT: controller via system identification.}
    \label{fig:synthetic_ct_identfb}
  \end{subfigure}

  \vspace{.25\baselineskip}
  \tikzexternalenable%
  \tikzsetnextfilename{synthetic_legend}%
  \begin{tikzpicture}[font = \plotfontsize]
  \begin{axis}[%
    hide axis,
    width  = .7\textwidth,
    height = .25\textwidth,
    scale only axis,
    xmin = 0,
    xmax = 10,
    ymin = 0.5,
    ymax = 1.5,
    legend columns = 5, 
    legend style = {
      at     = {(0,0)},
      anchor = center,
      /tikz/every even column/.append style = {column sep = 0.5cm}}
  ]
    
    \pgfplotsset{cycle list name = stateslist}
    \pgfplotsinvokeforeach{1, ..., 5}{\addplot coordinates {(0,0)};}
    \addlegendentry{state $x_{1}$}
    \addlegendentry{state $x_{2}$}
    \addlegendentry{state $x_{3}$}
    \addlegendentry{state $x_{4}$}
    \addlegendentry{state $x_{5}$}
  \end{axis}
\end{tikzpicture}%
  \tikzexternaldisable%

  \vspace{-.25\baselineskip}

  \caption{Synthetic example:
    The proposed inference approach leads to stabilizing
    controllers with data sets of only four samples in this example.
    In contrast, the classical two-step control procedure of first identifying a
    model and then constructing a controller leads to unstable dynamics in this
    example because of too few data samples in the data set,
    which is in agreement with \Cref{prp:sysident,thm:lowdatainform}.}
  \label{fig:synthetic}
  \vspace{-.75\baselineskip}
\end{figure}

Consider the system corresponding to the following
state-space model
\begin{align*}
  \begin{aligned}
    A_{0} & = \begin{bmatrix} -4 & 1 & 0 & 0 & 0 \\ 1 & -4 & 1 & 0 & 0 \\
      0 & 1 & 1 & 1 & 0 \\ 0 & 0 & 0 & -4 & 1 \\ 0 & 0 & 0 & 1 & -4 
      \end{bmatrix}, &
      B_{0} & = \begin{bmatrix} 1 & 0 \\ 0 & 1 \\ 0 & 0 \\ 0 & 0 \\ 0 & 0
      \end{bmatrix}\,,
  \end{aligned}
\end{align*}
with state-space dimension $\nh = 5$ and $\np = 2$ inputs.
The dimension of the system is $\nmin = 3$.
The matrix $A_{0}$ is based on a spatial discretization of the two-dimensional
Laplace operator, in which two disturbances are added such that $A_{0}$ is in
upper block triangular form and has one controllable continuous-time
unstable eigenvalue.
The input matrix $B_{0}$ are the first two columns of the identity matrix.
The initial condition is chosen to be homogeneous, $x_{0} = 0$.
To avoid trivially low-dimensional state vectors, the matrices $A_0$ and $B_0$
are transformed by an orthogonal matrix $Q$ with random entries.
The continuous-time version of the considered state-space model is then given
by
\begin{align*}
  \begin{aligned}
    A_{\rm{ct}} & = Q^{\trans} A_{0} Q, &
    B_{\rm{ct}} & = Q^{\trans} B_{0}.
  \end{aligned}
\end{align*}
The transformation does not change the size of the controllable system part, the
eigenvalues of the system matrix or the zero initial condition.
A discrete-time version of the example is obtained using the explicit
Euler scheme with time step size $\tau = 0.1$ on the continuous-time
state-space model such that
\begin{align*}
  \begin{aligned}
        A_{\rm{dt}} & = I_{n} + \tau A_{\rm{ct}}, &
        B_{\rm{dt}} & = \tau B_{\rm{ct}}.
      \end{aligned}
\end{align*}
The discrete-time state-space model has the same zero initial condition,
the same dimension of the controllable system part and also one controllable
unstable eigenvalue.

The trajectories of the discrete- and continuous-time state-space models
are plotted in \Cref{fig:synthetic_dt_nofb,fig:synthetic_ct_nofb}, respectively.
As expected for unstable systems, the trajectories do not converge to a
finite stable behavior but tend to infinity.

We know from \Cref{cor:numsamp,cor:lowdimmin} that
$X_{-}$ from computed data $\datatrip$ needs to have at least rank $3$
for the construction of a stabilizing feedback by informativity, i.e., due
to the homogeneous initial condition, we need overall $\dT = 4$ data samples.
Note that for the identification of a minimal state-space model of the
system, we need at least $\nmin + \np = 5$ data samples.
The numerical rank of the collected data samples in $X_{-}$ and $X_{+}$ is in
agreement with the minimal dimension of the system $\nmin = 3$.
We use \Cref{alg:infercont} to construct stabilizing
feedbacks based on \Cref{thm:lowdatainform} directly from the obtained
data using \Cref{prp:constfeed,cor:constfeed},
i.e., without system identification.
The trajectories corresponding to the stabilized systems are shown in
\Cref{fig:synthetic_dt_inferfb,fig:synthetic_ct_inferfb}.
Due to the closed-loop systems being stable, the trajectories converge to
finite values for the given input signal.
In contrast, because of too little data, the identified reduced state-space
models do not contain unstable eigenvalues such that the feedbacks based on
partial stabilization are zero.
The closed-loop matrices with the identified feedbacks are unstable, as
indicated by the trajectories in
\Cref{fig:synthetic_ct_identfb,fig:synthetic_dt_identfb}.


\subsection{Disturbed heat flow}

\begin{figure}[t]
  \vspace{-\baselineskip}
  \centering
  \begin{subfigure}[b]{.49\linewidth}
    \centering
  \tikzexternalenable%
  \tikzsetnextfilename{hf2d5_dt_sim_nofb}%
  \begin{tikzpicture}[font = \plotfontsize]
  \pgfplotstableread{graphics/data/hf2d5_dt_sim_nofb.dat}\tableSIM
  
  \begin{axis}[%
    name   = states,
    width  = .75\textwidth,
    height = .1\textheight,
    scale only axis,
    xmin = 0,
    xmax = 50,
    ymin = -4e+06,
    ymax = 0,
    xminorticks = false,
    yminorticks = false,
    xlabel = {time $t$},
    ylabel = {outputs $y(t)$},
    ylabel style   = {yshift = -.3em},
    scaled x ticks = false,
    x tick label style = {/pgf/number format/1000 sep={\,}},
    y tick label style = {/pgf/number format/1000 sep={\,}},
    cycle list name    = stateslist
  ]
  
    \foreach \y in {3, 4, ..., 6}{
      \addplot+ table[x index = 0, y index = \y] {\tableSIM};
    }
  \end{axis}
\end{tikzpicture}%
  \tikzexternaldisable%

    \caption{DT: no controller.}
    \label{fig:hf2d5_dt_nofb}
  \end{subfigure}%
  \hfill%
  \begin{subfigure}[b]{.49\linewidth}
    \centering
  \tikzexternalenable%
  \tikzsetnextfilename{hf2d5_ct_sim_nofb}%
  \begin{tikzpicture}[font = \plotfontsize]
  \pgfplotstableread{graphics/data/hf2d5_ct_sim_nofb.dat}\tableSIM
  
  \begin{axis}[%
    name   = states,
    width  = .75\textwidth,
    height = .1\textheight,
    scale only axis,
    xmin = 0,
    xmax = 50,
    ymin = -4e+06,
    ymax = 0,
    xminorticks = false,
    yminorticks = false,
    xlabel = {time $t$},
    ylabel = {outputs $y(t)$},
    ylabel style   = {yshift = -.3em},
    scaled x ticks = false,
    x tick label style = {/pgf/number format/1000 sep={\,}},
    y tick label style = {/pgf/number format/1000 sep={\,}},
    cycle list name    = stateslist
  ]
  
    \foreach \y in {3, 4, ..., 6}{
      \addplot+ table[x index = 0, y index = \y] {\tableSIM};
    }
  \end{axis}
\end{tikzpicture}%
  \tikzexternaldisable%

    \caption{CT: no controller.}
    \label{fig:hf2d5_ct_nofb}
  \end{subfigure}
  \vspace{.25\baselineskip}
  
  \begin{subfigure}[b]{.49\linewidth}
    \centering
  \tikzexternalenable%
  \tikzsetnextfilename{hf2d5_dt_sim_inferfb}%
  \begin{tikzpicture}[font = \plotfontsize]
  \pgfplotstableread{graphics/data/hf2d5_dt_sim_inferfb.dat}\tableSIM
  
  \begin{axis}[%
    name   = states,
    width  = .75\textwidth,
    height = .1\textheight,
    scale only axis,
    xmin = 0,
    xmax = 50,
    ymin = -150,
    ymax = 150,
    xminorticks = false,
    yminorticks = false,
    xlabel = {time $t$},
    ylabel = {outputs $y(t)$},
    ylabel style   = {yshift = -.3em},
    scaled x ticks = false,
    x tick label style = {/pgf/number format/1000 sep={\,}},
    y tick label style = {/pgf/number format/1000 sep={\,}},
    cycle list name    = stateslist
  ]
  
    \foreach \y in {3, 4, ..., 6}{
      \addplot+ table[x index = 0, y index = \y] {\tableSIM};
    }
  \end{axis}
\end{tikzpicture}%
  \tikzexternaldisable%

    \caption{DT: controller inference.}
    \label{fig:hf2d5_dt_inferfb}
  \end{subfigure}%
  \hfill%
  \begin{subfigure}[b]{.49\linewidth}
    \centering
  \tikzexternalenable%
  \tikzsetnextfilename{hf2d5_ct_sim_inferfb}%
  \begin{tikzpicture}[font = \plotfontsize]
  \pgfplotstableread{graphics/data/hf2d5_ct_sim_inferfb.dat}\tableSIM
  
  \begin{axis}[%
    name   = states,
    width  = .75\textwidth,
    height = .1\textheight,
    scale only axis,
    xmin = 0,
    xmax = 50,
    ymin = -200,
    ymax = 250,
    xminorticks = false,
    yminorticks = false,
    xlabel = {time $t$},
    ylabel = {outputs $y(t)$},
    ylabel style   = {yshift = -.3em},
    scaled x ticks = false,
    x tick label style = {/pgf/number format/1000 sep={\,}},
    y tick label style = {/pgf/number format/1000 sep={\,}},
    cycle list name    = stateslist
  ]
  
    \foreach \y in {3, 4, ..., 6}{
      \addplot+ table[x index = 0, y index = \y] {\tableSIM};
    }
  \end{axis}
\end{tikzpicture}%
  \tikzexternaldisable%

    \caption{CT: controller inference.}
    \label{fig:hf2d5_ct_inferfb}
  \end{subfigure}
  \vspace{.25\baselineskip}
  
  \begin{subfigure}[b]{.49\linewidth}
    \centering
  \tikzexternalenable%
  \tikzsetnextfilename{hf2d5_dt_sim_identfb}%
  \begin{tikzpicture}[font = \plotfontsize]
  \pgfplotstableread{graphics/data/hf2d5_dt_sim_identfb.dat}\tableSIM
  
  \begin{axis}[%
    name   = states,
    width  = .75\textwidth,
    height = .1\textheight,
    scale only axis,
    xmin = 0,
    xmax = 50,
    ymin = -4000,
    ymax = 4000,
    xminorticks = false,
    yminorticks = false,
    xlabel = {time $t$},
    ylabel = {outputs $y(t)$},
    ylabel style   = {yshift = -.3em},
    scaled x ticks = false,
    x tick label style = {/pgf/number format/1000 sep={\,}},
    y tick label style = {/pgf/number format/1000 sep={\,}},
    scaled y ticks     = {base 10:-3},
    cycle list name    = stateslist
  ]
  
    \foreach \y in {3, 4, ..., 6}{
      \addplot+ table[x index = 0, y index = \y] {\tableSIM};
    }
  \end{axis}
\end{tikzpicture}%
  \tikzexternaldisable%

    \caption{DT: controller via system identification.}
    \label{fig:hf2d5_dt_identfb}
  \end{subfigure}%
  \hfill%
  \begin{subfigure}[b]{.49\linewidth}
    \centering
  \tikzexternalenable%
  \tikzsetnextfilename{hf2d5_ct_sim_identfb}%
  \begin{tikzpicture}[font = \plotfontsize]
  \pgfplotstableread{graphics/data/hf2d5_ct_sim_identfb.dat}\tableSIM
  
  \begin{axis}[%
    name   = states,
    width  = .75\textwidth,
    height = .1\textheight,
    scale only axis,
    xmin = 0,
    xmax = 50,
    ymin = -2.5e12,
    ymax = 0,
    xminorticks = false,
    yminorticks = false,
    xlabel = {time $t$},
    ylabel = {outputs $y(t)$},
    ylabel style   = {yshift = -.3em},
    scaled x ticks = false,
    x tick label style = {/pgf/number format/1000 sep={\,}},
    y tick label style = {/pgf/number format/1000 sep={\,}},
    cycle list name    = stateslist
  ]
  
    \foreach \y in {3, 4, ..., 6}{
      \addplot+ table[x index = 0, y index = \y] {\tableSIM};
    }
  \end{axis}
\end{tikzpicture}%
  \tikzexternaldisable%

    \caption{CT: controller via system identification.}
    \label{fig:hf2d5_ct_identfb}
  \end{subfigure}

  \vspace{.25\baselineskip}
  \tikzexternalenable%
  \tikzsetnextfilename{hf2d5_legend}%
  \begin{tikzpicture}[font = \plotfontsize]
  \begin{axis}[%
    hide axis,
    width  = .75\textwidth,
    height = .1\textheight,
    scale only axis,
    xmin = 0,
    xmax = 10,
    ymin = 0.5,
    ymax = 1.5,
    legend columns = 5, 
    legend style = {
      at     = {(0,0)},
      anchor = center,
      /tikz/every even column/.append style = {column sep = 0.5cm}}
  ]
    
    \pgfplotsset{cycle list name = stateslist}
    \pgfplotsinvokeforeach{1, 2, 3, 4}{\addplot coordinates {(0,0)};}
    \addlegendentry{output $y_{1}$}
    \addlegendentry{output $y_{2}$}
    \addlegendentry{output $y_{3}$}
    \addlegendentry{output $y_{4}$}
  \end{axis}
\end{tikzpicture}%
  \tikzexternaldisable%

  \vspace{-.25\baselineskip}

  \caption{Heat flow:
    The controllers constructed via the inference approach stabilize the
    system.
    In contrast, applying the traditional two-step approach of first identifying
    a model and then controlling to the same data set only manages to decrease
    the growth of outputs but fails to stabilize the system in the discrete-time
    case.
    In case of the continuous-time system, the controller obtained from the
    identified model even accelerates the growth of the outputs and thus further
    destabilizes the system.}
  \label{fig:hf2d5}
  \vspace{-.75\baselineskip}
\end{figure}
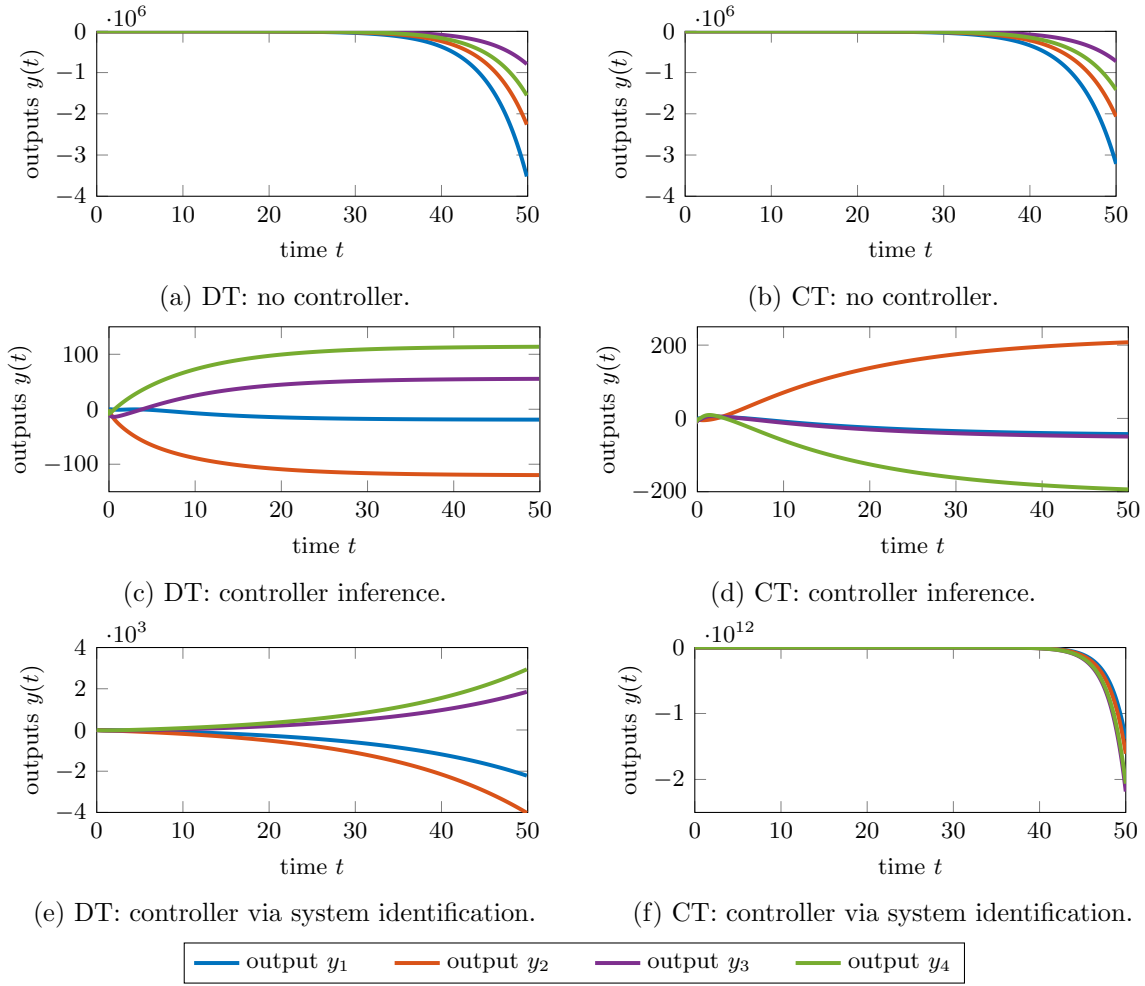

Consider now the system \sf{HF2D5} described in~\cite{Lei04}.
It corresponds to a $2$-dimensional linear heat flow describing the heating
process in a rectangular domain affected by disturbances;
see~\cite[Sec.~3]{Lei04}.
The spatial finite difference discretization yields a high-dimensional
state-space model of dimension $\nh = 4\,489$ with $\np = 2$ inputs
and zero initial conditions.
A discrete-time version of the model is obtained by using the implicit
Euler discretization with time step size $\tau = 0.1$.
The discrete- and continuous-time versions of the model have a single
unstable eigenvalue due to the modeled disturbance in the heating process.
The measured outputs of the resulting time simulations using a unit step
input signal are shown in \Cref{fig:hf2d5_dt_nofb,fig:hf2d5_ct_nofb}.

We employ \Cref{alg:infercont} in the approximate sense as discussed in
\Cref{subsec:apprdata}.
We computed $17$ samples in the discrete-time case to obtain an
approximating subspace $\Vcalt$ of dimension $\nrr = 17$.
In the continuous-time case, we computed $20$ samples, which, due to the
concatenation of the states and their time derivatives, resulted in a subspace
of dimension $\nrr = 21$.
In both cases, the subspaces are constructed to approximate the sampled states
up to machine precision.
We employ the re-projection approach from \Cref{alg:reproj}
to get another data triplet $\datatript$ for the computations, since this
leads to better numerical behavior of the linear matrix inequality solvers.
In both cases, we computed $\dT = \nrr + 1$ data samples via re-projection.
Note that at least $\nrr + \np = \nrr + 2$ samples are necessary for the unique
identification of a reduced state-space model, i.e., the data set contains too
few samples for system identification.

The controller inference leads in both cases to feedbacks that stabilize
the systems.
The corresponding simulations are shown in
\Cref{fig:hf2d5_dt_inferfb,fig:hf2d5_ct_inferfb}.
In contrast, the identified discrete-time reduced model has one unstable
controllable eigenvalue like the original system, which is then stabilized via
partial stabilization.
Applying the lifted controller to the original system shifts
the unstable eigenvalue closer to the unit circle; however, the shift is
insufficient to stabilize the system.
This can be seen in \Cref{fig:hf2d5_dt_identfb}, where the outputs are
still diverging but slower than when no controller is applied.
In the continuous-time case, the identified model has two complex
conjugate unstable eigenvalues, which indicates that a different underlying
system than the true one is approximated.
The constructed feedback stabilizes the learned reduced model but, if applied
to the true system, further destabilizes the system as shown
in \Cref{fig:hf2d5_ct_identfb}.


\subsection{Unstable laminar flow in a cylinder wake}

\begin{figure}
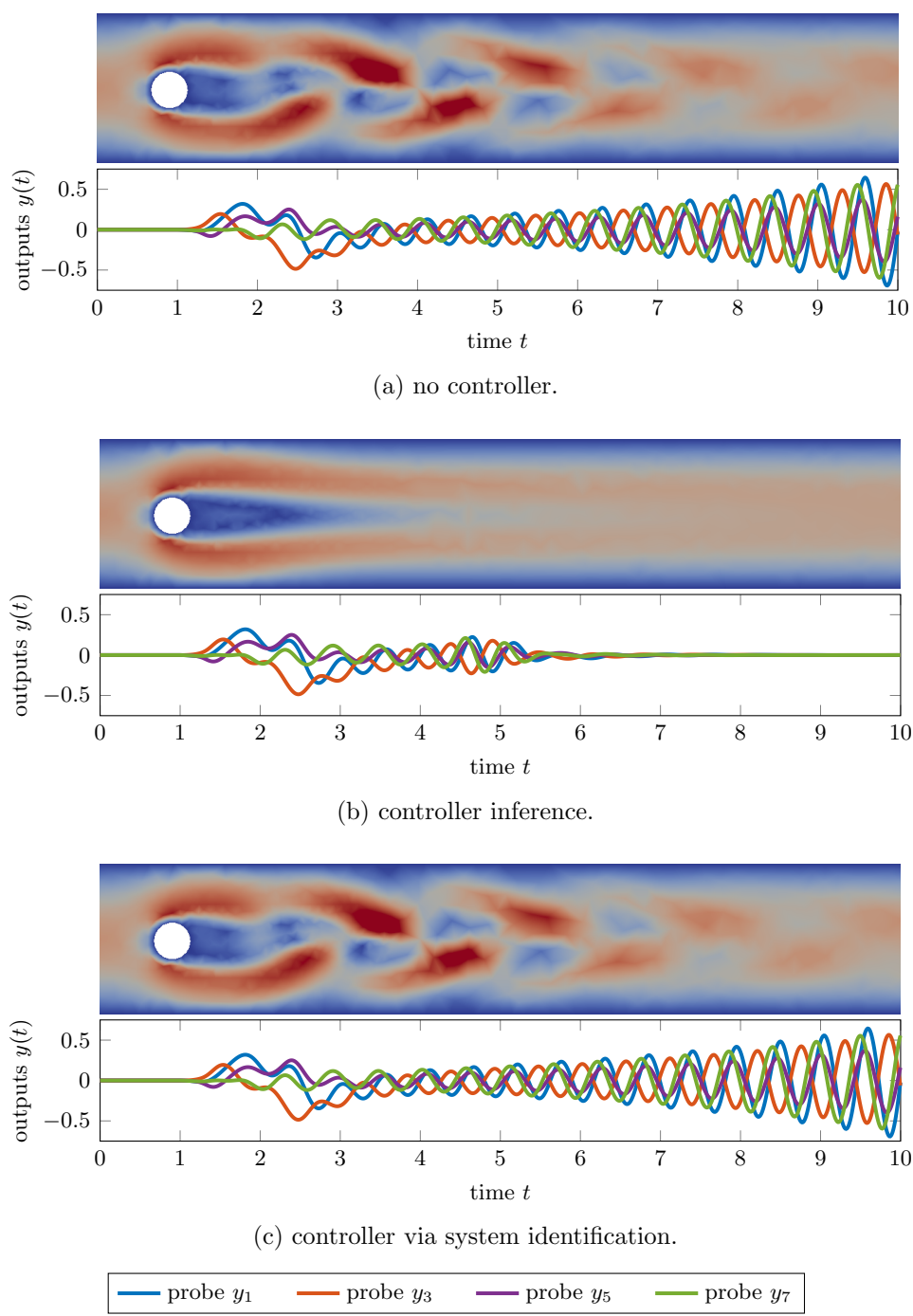

  \centering
  \begin{subfigure}[b]{\linewidth}
    \centering
  \tikzexternalenable%
  \tikzsetnextfilename{cylinderwake_dt_sim_nofb}%
  \input{graphics/cylinderwake_dt_sim_nofb.tikz}%
  \tikzexternaldisable%

    \caption{no controller.}
    \label{fig:cylinderwake_dt_nofb}
  \end{subfigure}
  \vspace{0\baselineskip}

  \begin{subfigure}[b]{\linewidth}
    \centering
  \tikzexternalenable%
  \tikzsetnextfilename{cylinderwake_dt_sim_inferfb}%
  \input{graphics/cylinderwake_dt_sim_inferfb.tikz}%
  \tikzexternaldisable%

    \caption{controller inference.}
    \label{fig:cylinderwake_dt_inferfb}
  \end{subfigure}
  \vspace{0\baselineskip}

  \begin{subfigure}[b]{\linewidth}
    \centering
  \tikzexternalenable%
  \tikzsetnextfilename{cylinderwake_dt_sim_identfb}%
  \input{graphics/cylinderwake_dt_sim_identfb.tikz}%
  \tikzexternaldisable%

    \caption{controller via system identification.}
    \label{fig:cylinderwake_dt_identfb}
  \end{subfigure}
  
  \vspace{.5\baselineskip}
  \tikzexternalenable%
  \tikzsetnextfilename{cylinderwake_legend}%
  \begin{tikzpicture}[font = \plotfontsize]
  \begin{axis}[%
    hide axis,
    width  = .75\textwidth,
    height = .1\textheight,
    scale only axis,
    xmin = 0,
    xmax = 10,
    ymin = 0.5,
    ymax = 1.5,
    legend columns = 5, 
    legend style = {
      at     = {(0,0)},
      anchor = center,
      /tikz/every even column/.append style = {column sep = 0.5cm}}
  ]
    
    \pgfplotsset{cycle list name = stateslist}
    \pgfplotsinvokeforeach{1, 2, 3, 4}{\addplot coordinates {(0,0)};}
    \addlegendentry{probe $y_{1}$}
    \addlegendentry{probe $y_{3}$}
    \addlegendentry{probe $y_{5}$}
    \addlegendentry{probe $y_{7}$}
  \end{axis}
\end{tikzpicture}%
  \tikzexternaldisable%

  \vspace{0\baselineskip}
  
  \caption{Flow behind cylinder:
    In each sub figure, the top plot shows the magnitude of the high-dimensional
    state at final time and the bottom plot shows the averaged velocity in
    horizontal direction at four probes.
    Controller inference is able to stabilize the system,
    whereas traditional data-driven control via system identification fails when
    applied to the same data set.}
  \label{fig:cylinderwake_dt}
\end{figure}

We now consider the dynamics of a laminar flow inside a wake
with a circular obstacle; see \Cref{fig:cylinderwake_dt} for
the geometry.
The flow is described by the Navier-Stokes equations.
The steady state is known to behave unstable for medium and higher Reynolds
numbers.
The goal is to stabilize the system such that deviations from the steady
state are steered back using controls in vertical and horizontal directions
exactly behind the obstacle.
We employ the setup from~\cite{BehBH17} at Reynolds number $90$ and
consider a linearization of the Navier-Stokes equations around the desired
steady state such that the linear system describes the deviation.
This example is a linear system with $\nh = 6\,618$ differential-algebraic
equations (DAEs) and $\np = 6$ inputs describing the controls behind the
obstacle in vertical and horizontal directions.
The system has zero initial conditions.
Due to the DAE form of the problem, we cannot directly obtain the time
derivatives of the state.
Therefore, we consider the example only in discrete-time form using the implicit
Euler discretization with time step size $\tau = 0.0025$.
The resulting system has two unstable eigenvalues.
For visualizations, four sensors are used in the back area of the wake
that measure averaged velocities in horizontal and vertical directions.
The trajectories obtained without control and the magnitudes of the state for
the final time step are shown in \Cref{fig:cylinderwake_dt_nofb}.
As input, a disturbance is emulated in the time interval $[1, 2]$ via a
constant Gaussian step signal.

We take $180$ samples to compute an approximation $\Vcalt$ of the
reachability subspace of dimension $\nrr = 123$.
We use \Cref{alg:reproj} to compute $\nrr + 2 = 125$ re-projected
data samples for the design of stabilizing controllers.
The inferred feedback design stabilizes the system and the trajectories, as
shown by the state of the final time step in
\Cref{fig:cylinderwake_dt_inferfb}.
To improve the presentation of the stabilizing effect of the controller, it
is only applied from time step $5$ onwards.
The results show that the system stops oscillating and is steered back to the
steady state.

In contrast, the state-space model we identified for the reduced re-projected
data set does not have any unstable eigenvalues that could be stabilized.
In fact, we know that the unique identification of a model would
need at least four more data samples ($\nrr + \np = 129$); cf.
\Cref{prp:sysident}.
As result, the constructed controller is zero and does not stabilize the
original system as shown in \Cref{fig:cylinderwake_dt_identfb}.


\section{Conclusions and outlook}%
\label{sec:conclusions}

Learning from data becomes an ever more important component of scientific
computing.
Typically, the focus is on learning models of physical systems.
Once a model is learned, classical scientific computing techniques can be
applied to the learned models for solving upstream tasks such as control,
design, and uncertainty quantification.
However, learning models is only a means to an end in these cases.
The ultimate goal is, e.g., finding an optimal design point and a controller,
rather than learning models.
This raises the question if it is necessary to learn models of complex physics
that completely describe the systems of interest if the goal is solving
potentially simpler  upstream tasks.
A similar question is asked in~\cite{Peh19}, which considers Monte Carlo
estimation as the upstream task.
It proposes to learn models specifically for the use as control variates for
variance reduction.
These models then can have large biases, which is not acceptable for making
predictions about the system response but is sufficient for variance reduction.
Another example is the work~\cite{DeHKNetal21} that studies the learning of
operators corresponding to linear Bayesian inverse problems in contrast to first
learning a (forward) model and then inverting with classical scientific
computing methods.

In this manuscript, we studied the task of stabilizing linear time-invariant
systems.
Building on previous work~\cite{VanETetal20}, our finding is that it is
sufficient to have as many samples as the minimal dimension of the system, which
is fewer than the minimal number of samples required for identifying a minimal
model.
Thus, it is unnecessary to learn models of the underlying systems when the task
is stabilization under the assumptions we made, which results in lower data
requirements.
Given these findings, we believe understanding when learning models of systems
is necessary is an important research direction, which is especially critical in
large-scale science and engineering applications where the state dynamics are
complex and data are scarce.


\section*{Acknowledgments}%
\addcontentsline{toc}{section}{Acknowledgments}

The authors acknowledge support from the Air Force Office of Scientific Research
(AFOSR) award FA9550-21-1-0222 (Dr.~Fariba Fahroo).
The second author additionally acknowledges support from the National Science
Foundation under Grant No.~2012250 and Grant No.~1901091.


\addcontentsline{toc}{section}{References}
\bibliographystyle{plainurl}
\bibliography{bibtex/myref}

\begin{thebibliography}{10}

\bibitem{Ant05}
A.~C. Antoulas.
\newblock {\em Approximation of Large-Scale Dynamical Systems}, volume~6 of
  {\em Adv. Des. Control}.
\newblock SIAM, Philadelphia, PA, 2005.
\newblock \href {https://doi.org/10.1137/1.9780898718713}
  {\path{doi:10.1137/1.9780898718713}}.

\bibitem{Arm75}
E.~Armstrong.
\newblock An extension of {B}ass' algorithm for stabilizing linear continuous
  constant systems.
\newblock {\em {IEEE} Trans. Autom. Control}, 20(1):153--154, 1975.
\newblock \href {https://doi.org/10.1109/TAC.1975.1100874}
  {\path{doi:10.1109/TAC.1975.1100874}}.

\bibitem{ArmR76}
E.~Armstrong and G.~Rublein.
\newblock A stabilization algorithm for linear discrete constant systems.
\newblock {\em {IEEE} Trans. Autom. Control}, 21(4):629--631, 1976.
\newblock \href {https://doi.org/10.1109/TAC.1976.1101295}
  {\path{doi:10.1109/TAC.1976.1101295}}.

\bibitem{BeaG12}
C.~A. Beattie and S.~Gugercin.
\newblock Realization-independent {$\mathcal{H}_2$}-approximation.
\newblock In {\em 51st IEEE Conference on Decision and Control (CDC)}, pages
  4953--4958, 2012.
\newblock \href {https://doi.org/10.1109/CDC.2012.6426344}
  {\path{doi:10.1109/CDC.2012.6426344}}.

\bibitem{BehBH17}
M.~Behr, P.~Benner, and J.~Heiland.
\newblock Example setups of {N}avier-{S}tokes equations with control and
  observation: {S}patial discretization and representation via linear-quadratic
  matrix coefficients.
\newblock e-print arXiv:1707.08711, arXiv, 2017.
\newblock Mathematical Software (cs.MS).
\newblock URL: \url{https://arxiv.org/abs/1707.08711}.

\bibitem{BenCQetal00}
P.~Benner, M.~Castillo, E.~S. Quintana-Ort{\'\i}, and V.~Hern{\'a}ndez.
\newblock Parallel partial stabilizing algorithms for large linear control
  systems.
\newblock {\em J. Supercomput.}, 15(2):193--206, 2000.
\newblock \href {https://doi.org/10.1023/A:1008108004247}
  {\path{doi:10.1023/A:1008108004247}}.

\bibitem{BenGW15}
P.~Benner, S.~Gugercin, and K.~Willcox.
\newblock A survey of projection-based model reduction methods for parametric
  dynamical systems.
\newblock {\em SIAM Rev.}, 57(4):483--531, 2015.
\newblock \href {https://doi.org/10.1137/130932715}
  {\path{doi:10.1137/130932715}}.

\bibitem{BenHW21}
P.~Benner, J.~Heiland, and S.~W.~R. Werner.
\newblock Robust output-feedback stabilization for incompressible flows using
  low-dimensional {$\mathcal{H}_{\infty}$}-controllers.
\newblock e-print 2103.01608, arXiv, 2021.
\newblock Optimization and Control (math.OC).
\newblock URL: \url{https://arxiv.org/abs/2103.01608}.

\bibitem{BenLP08}
P.~Benner, J.-R. Li, and T.~Penzl.
\newblock Numerical solution of large-scale {L}yapunov equations, {R}iccati
  equations, and linear-quadratic optimal control problems.
\newblock {\em Numer. Lin. Alg. Appl.}, 15(9):755--777, 2008.
\newblock \href {https://doi.org/10.1002/nla.622} {\path{doi:10.1002/nla.622}}.

\bibitem{BenSGetal21}
P.~Benner, W.~Schilders, S.~Grivet-Talocia, A.~Quarteroni, G.~Rozza, and L.~M.
  Silveira.
\newblock {\em Model Order Reduction. Volume 1: System- and Data-Driven Methods
  and Algorithms}.
\newblock De Gruyter, Berlin, Boston, 2021.
\newblock \href {https://doi.org/10.1515/9783110498967}
  {\path{doi:10.1515/9783110498967}}.

\bibitem{BenSGetal21a}
P.~Benner, W.~Schilders, S.~Grivet-Talocia, A.~Quarteroni, G.~Rozza, and L.~M.
  Silveira.
\newblock {\em Model Order Reduction. Volume 2: Snapshot-Based Methods and
  Algorithms}.
\newblock De Gruyter, Berlin, Boston, 2021.
\newblock \href {https://doi.org/10.1515/9783110671490}
  {\path{doi:10.1515/9783110671490}}.

\bibitem{BenW19b}
P.~Benner and S.~W.~R. Werner.
\newblock {MORLAB} -- {Model Order Reduction LABoratory} (version 5.0), August
  2019.
\newblock see also: \url{https://www.mpi-magdeburg.mpg.de/projects/morlab}.
\newblock \href {https://doi.org/10.5281/zenodo.3332716}
  {\path{doi:10.5281/zenodo.3332716}}.

\bibitem{BenW21c}
P.~Benner and S.~W.~R. Werner.
\newblock {MORLAB}---{T}he {M}odel {O}rder {R}eduction {LAB}oratory.
\newblock In P.~Benner, T.~Breiten, H.~Fa{\ss}bender, M.~Hinze, T.~Stykel, and
  R.~Zimmermann, editors, {\em Model Reduction of Complex Dynamical Systems},
  volume 171 of {\em International Series of Numerical Mathematics}, pages
  393--415. Birkh{\"a}user, Cham, 2021.
\newblock \href {https://doi.org/10.1007/978-3-030-72983-7_19}
  {\path{doi:10.1007/978-3-030-72983-7_19}}.

\bibitem{BerKSetal20}
J.~Berberich, A.~Koch, C.~W. Scherer, and F.~Allg{\"o}wer.
\newblock Robust data-driven state-feedback design.
\newblock In {\em 2020 American Control Conference (ACC)}, pages 1532--1538,
  2020.
\newblock \href {https://doi.org/10.23919/ACC45564.2020.9147320}
  {\path{doi:10.23919/ACC45564.2020.9147320}}.

\bibitem{BreMS21}
T.~Breiten, R.~Morandin, and P.~Schulze.
\newblock Error bounds for port-{Hamiltonian} model and controller reduction
  based on system balancing.
\newblock {\em Comput. Math. Appl.}, 116:100--115, 2021.
\newblock \href {https://doi.org/10.1016/j.camwa.2021.07.022}
  {\path{doi:10.1016/j.camwa.2021.07.022}}.

\bibitem{BruBPetal16}
S.~L. Brunton, B.~W. Brunton, J.~L. Proctor, and J.~N. Kutz.
\newblock {K}oopman invariant subspaces and finite linear representations of
  nonlinear dynamical systems for control.
\newblock {\em PLoS ONE}, 11(2):e0150171, 2016.
\newblock \href {https://doi.org/10.1371/journal.pone.0150171}
  {\path{doi:10.1371/journal.pone.0150171}}.

\bibitem{BruK19}
S.~L. Brunton and J.~N. Kutz.
\newblock {\em Data-Driven Science and Engineering:{M}achine Learning,
  Dynamical Systems, and Control}.
\newblock Cambridge University Press, Cambridge, 2019.
\newblock \href {https://doi.org/10.1017/9781108380690}
  {\path{doi:10.1017/9781108380690}}.

\bibitem{BruPK16}
S.~L. Brunton, J.~L. Proctor, and J.~N. Kutz.
\newblock Discovering governing equations from data by sparse identification of
  nonlinear dynamical systems.
\newblock {\em Proc. Natl. Acad. Sci. U. S. A.}, 113(15):3932--3937, 2016.
\newblock \href {https://doi.org/10.1073/pnas.1517384113}
  {\path{doi:10.1073/pnas.1517384113}}.

\bibitem{CamLS02}
M.~C. Campi, A.~Lecchini, and S.~M. Savaresi.
\newblock Virtual reference feedback tuning: a direct method for the design of
  feedback controllers.
\newblock {\em Automatica J. IFAC}, 38(8):1337--1346, 2002.
\newblock \href {https://doi.org/10.1016/S0005-1098(02)00032-8}
  {\path{doi:10.1016/S0005-1098(02)00032-8}}.

\bibitem{Dat04}
B.~N. Datta.
\newblock {\em Numerical Methods for Linear Control Systems: {D}esign and
  Analysis}.
\newblock Academic Press, San Diego, CA, 2004.
\newblock \href {https://doi.org/10.1016/B978-0-12-203590-6.X5000-9}
  {\path{doi:10.1016/B978-0-12-203590-6.X5000-9}}.

\bibitem{DeHKNetal21}
M.~V. De~Hoop, N.~B. Kovachki, N.~H. Nelsen, and A.~M. Stuart.
\newblock Convergence rates for learning linear operators from noisy data.
\newblock e-print 2108.12515, arXiv, 2021.
\newblock Statistics Theory (math.ST).
\newblock URL: \url{https://arxiv.org/abs/2108.12515}.

\bibitem{DePT20}
C.~De~Persis and P.~Tesi.
\newblock Formulas for data-driven control: {S}tabilization, optimality, and
  robustness.
\newblock {\em {IEEE} Trans. Autom. Control}, 65(3):909--924, 2020.
\newblock \href {https://doi.org/10.1109/TAC.2019.2959924}
  {\path{doi:10.1109/TAC.2019.2959924}}.

\bibitem{DeaMMetal20}
S.~Dean, H.~Mania, N.~Matni, B.~Recht, and Tu~S.
\newblock On the sample complexity of the linear quadratic regulator.
\newblock {\em Found. Comput. Math.}, 20(4):633--679, 2020.
\newblock \href {https://doi.org/10.1007/s10208-019-09426-y}
  {\path{doi:10.1007/s10208-019-09426-y}}.

\bibitem{DraH99}
V.~Dragan and A.~Halanay.
\newblock {\em Stabilization of Linear Systems}.
\newblock Systems \& Control: Foundations \& Applications. Birkh{\"a}user,
  Boston, MA, 1999.
\newblock \href {https://doi.org/10.1007/978-1-4612-1570-7}
  {\path{doi:10.1007/978-1-4612-1570-7}}.

\bibitem{DrmMM18}
Z.~Drma{\v{c}}, I.~Mezi{\'c}, and R.~Mohr.
\newblock Data driven modal decompositions: {A}nalysis and enhancements.
\newblock {\em {SIAM} J. Sci. Comput.}, 40(4):A2253--A2285, 2018.
\newblock \href {https://doi.org/10.1137/17M1144155}
  {\path{doi:10.1137/17M1144155}}.

\bibitem{FliJ13}
M.~Fliess and C.~Join.
\newblock Model-free control.
\newblock {\em Int. J. Control}, 86(12):2228--2252, 2013.
\newblock \href {https://doi.org/10.1080/00207179.2013.810345}
  {\path{doi:10.1080/00207179.2013.810345}}.

\bibitem{GevBBetal09}
M.~Gevers, A.~S. Bazanella, X.~Bombois, and L.~Miskovic.
\newblock Identification and the information matrix: {H}ow to get just
  sufficiently rich?
\newblock {\em {IEEE} Trans. Autom. Control}, 54(12):2828--2840, 2009.
\newblock \href {https://doi.org/10.1109/TAC.2009.2034199}
  {\path{doi:10.1109/TAC.2009.2034199}}.

\bibitem{GolV13}
G.~H. Golub and C.~F. Van~Loan.
\newblock {\em Matrix Computations}.
\newblock Johns Hopkins Studies in the Mathematical Sciences. Johns Hopkins
  University Press, Baltimore, fourth edition, 2013.

\bibitem{GosGB21}
I.~V. Gosea, S.~Gugercin, and C.~Beattie.
\newblock Data-driven balancing of linear dynamical systems.
\newblock e-print 2104.01006, arXiv, 2021.
\newblock Numerical Analysis (math.NA).
\newblock URL: \url{https://arxiv.org/abs/2104.01006}.

\bibitem{GraB20}
M.~Grant and S.~Boyd.
\newblock {CVX}: Matlab software for disciplined convex programming, version
  2.2.
\newblock \url{http://cvxr.com/cvx}, January 2020.

\bibitem{GraB08}
M.~C. Grant and S.~P. Boyd.
\newblock Graph implementations for nonsmooth convex programs.
\newblock In V.~D. Blondel, S.~P. Boyd, and H.~Kimura, editors, {\em Recent
  Advances in Learning and Control}, volume 371 of {\em Lect. Notes Control
  Inf. Sci.}, pages 95--110. Springer, London, 2008.
\newblock \href {https://doi.org/10.1007/978-1-84800-155-8\_7}
  {\path{doi:10.1007/978-1-84800-155-8\_7}}.

\bibitem{JonS83}
E.~A. Jonckheere and L.~M. Silverman.
\newblock A new set of invariants for linear systems--application to reduced
  order compensator design.
\newblock {\em {IEEE} Trans. Autom. Control}, 28(10):953--964, 1983.
\newblock \href {https://doi.org/10.1109/TAC.1983.1103159}
  {\path{doi:10.1109/TAC.1983.1103159}}.

\bibitem{KaiKB18}
E.~Kaiser, J.~N. Kutz, and S.~L. Brunton.
\newblock Sparse identification of nonlinear dynamics for model predictive
  control in the low-data limit.
\newblock {\em Proc. R. Soc. A: Math. Phys. Eng. Sci.}, 474(2219):20180335,
  2018.
\newblock \href {https://doi.org/10.1098/rspa.2018.0335}
  {\path{doi:10.1098/rspa.2018.0335}}.

\bibitem{KaiKB21}
E.~Kaiser, J.~N. Kutz, and S.~L. Brunton.
\newblock Data-driven discovery of {K}oopman eigenfunctions for control.
\newblock {\em Mach. Learn.: Sci. Technol.}, 2(3):035023, 2021.
\newblock \href {https://doi.org/10.1088/2632-2153/abf0f5}
  {\path{doi:10.1088/2632-2153/abf0f5}}.

\bibitem{KraPW17}
B.~Kramer, B.~Peherstorfer, and K.~Willcox.
\newblock Feedback control for systems with uncertain parameters using
  online-adaptive reduced models.
\newblock {\em {SIAM} J. Appl. Dyn. Syst.}, 16(3):1563--1586, 2017.
\newblock \href {https://doi.org/10.1137/16M1088958}
  {\path{doi:10.1137/16M1088958}}.

\bibitem{LanR95}
P.~Lancaster and L.~Rodman.
\newblock {\em Algebraic {R}iccati Equations}.
\newblock Oxford Science Publications. The Clarendon Press, Oxford University
  Press, New York, 1995.

\bibitem{Lei04}
F.~Leibfritz.
\newblock {$COMPl_{e}ib$}: \emph{CO}nstrained \emph{M}atrix-optimization
  \emph{P}roblem \emph{lib}rary -- a collection of test examples for nonlinear
  semidefinite programs, control system design and related problems.
\newblock Tech.-report, University of Trier, 2004.
\newblock URL:
  \url{http://www.friedemann-leibfritz.de/COMPlib_Data/COMPlib_Main_Paper.pdf}.

\bibitem{LeqGMetal03}
O.~Lequin, M.~Gevers, M.~Mossberg, E.~Bosmans, and L.~Triest.
\newblock Iterative feedback tuning of {PID} parameters: comparison with
  classical tuning rules.
\newblock {\em Control Eng. Pract.}, 11(9):1023--1033, 2003.
\newblock \href {https://doi.org/10.1016/S0967-0661(02)00303-9}
  {\path{doi:10.1016/S0967-0661(02)00303-9}}.

\bibitem{MayA07}
A.~J. Mayo and A.~C. Antoulas.
\newblock A framework for the solution of the generalized realization problem.
\newblock {\em Linear Algebra Appl.}, 425(2--3):634--662, 2007.
\newblock Special issue in honor of P.~A. Fuhrmann, Edited by A.~C. Antoulas,
  U. Helmke, J. Rosenthal, V. Vinnikov, and E. Zerz.
\newblock \href {https://doi.org/10.1016/j.laa.2007.03.008}
  {\path{doi:10.1016/j.laa.2007.03.008}}.

\bibitem{MOS19}
{MOSEK ApS}.
\newblock The {MOSEK} optimization toolbox for {MATLAB} manual.
  {V}ersion~9.1.9, November 2019.
\newblock URL: \url{https://docs.mosek.com/9.1/toolbox/index.html}.

\bibitem{Peh19}
B.~Peherstorfer.
\newblock Multifidelity {Monte} {Carlo} estimation with adaptive low-fidelity
  models.
\newblock {\em {SIAM/ASA} J. Uncertainty Quantification}, 7(2):579--603, 2019.
\newblock \href {https://doi.org/10.1137/17M1159208}
  {\path{doi:10.1137/17M1159208}}.

\bibitem{Peh20}
B.~Peherstorfer.
\newblock Sampling low-dimensional {M}arkovian dynamics for preasymptotically
  recovering reduced models from data with operator inference.
\newblock {\em {SIAM} J. Sci. Comput.}, 42(5):A3489--A3515, 2020.
\newblock \href {https://doi.org/10.1137/19M1292448}
  {\path{doi:10.1137/19M1292448}}.

\bibitem{PehGW17}
B.~Peherstorfer, S.~Gugercin, and K.~Willcox.
\newblock Data-driven reduced model construction with time-domain {L}oewner
  models.
\newblock {\em {SIAM} J. Sci. Comput.}, 39(5):A2152--A2178, 2017.
\newblock \href {https://doi.org/10.1137/16M1094750}
  {\path{doi:10.1137/16M1094750}}.

\bibitem{PehW16}
B.~Peherstorfer and K.~Willcox.
\newblock Data-driven operator inference for nonintrusive projection-based
  model reduction.
\newblock {\em Comput. Methods Appl. Mech. Eng.}, 306:196--215, 2016.
\newblock \href {https://doi.org/10.1016/j.cma.2016.03.025}
  {\path{doi:10.1016/j.cma.2016.03.025}}.

\bibitem{QuaR14}
A.~Quarteroni and G.~Rozza.
\newblock {\em Reduced Order Methods for Modeling and Computational Reduction},
  volume~9 of {\em MS{\&}A -- Modeling, Simulation and Applications}.
\newblock Springer, Cham, 2014.
\newblock \href {https://doi.org/10.1007/978-3-319-02090-7}
  {\path{doi:10.1007/978-3-319-02090-7}}.

\bibitem{Ros70}
H.~H. Rosenbrock.
\newblock {\em State-space and Multivariable Theory}, volume~3 of {\em Studies
  in dynamical systems}.
\newblock Wiley, New York, 1970.

\bibitem{SafT95}
M.~G. Safonov and T.-C. Tsao.
\newblock The unfalsified control concept: {A} direct path from experiment to
  controller.
\newblock In B.~.A Francis and A.~R. Tannenbaum, editors, {\em Feedback
  Control, Nonlinear Systems, and Complexity}, volume 202 of {\em Lect. Notes
  Control Inf. Sci.}, pages 196--214. Springer, Berlin, Heidelberg, 1995.
\newblock \href {https://doi.org/10.1007/BFb0027678}
  {\path{doi:10.1007/BFb0027678}}.

\bibitem{SchCHetal13}
H.~Schaeffer, R.~Caflisch, C.~D. Hauck, and S.~Osher.
\newblock Sparse dynamics for partial differential equations.
\newblock {\em Proc. Natl. Acad. Sci. U. S. A.}, 110(17):6634--6639, 2013.
\newblock \href {https://doi.org/10.1073/pnas.1302752110}
  {\path{doi:10.1073/pnas.1302752110}}.

\bibitem{SchTW18}
H.~Schaeffer, G.~Tran, and R.~Ward.
\newblock Extracting sparse high-dimensional dynamics from limited data.
\newblock {\em {SIAM} J. Appl. Math.}, 78(6):3279--3295, 2018.
\newblock \href {https://doi.org/10.1137/18M116798X}
  {\path{doi:10.1137/18M116798X}}.

\bibitem{Sch10}
P.~J. Schmid.
\newblock Dynamic mode decomposition of numerical and experimental data.
\newblock {\em J. Fluid Mech.}, 656:5--28, 2010.
\newblock \href {https://doi.org/10.1017/S0022112010001217}
  {\path{doi:10.1017/S0022112010001217}}.

\bibitem{SchU16}
P.~Schulze and B.~Unger.
\newblock Data-driven interpolation of dynamical systems with delay.
\newblock {\em Syst. Control Lett.}, 97:125--131, 2016.
\newblock \href {https://doi.org/10.1016/j.sysconle.2016.09.007}
  {\path{doi:10.1016/j.sysconle.2016.09.007}}.

\bibitem{SchUBetal18}
P.~Schulze, B.~Unger, C.~Beattie, and S.~Gugercin.
\newblock Data-driven structured realization.
\newblock {\em Linear Algebra Appl.}, 537:250--286, 2018.
\newblock \href {https://doi.org/10.1016/j.laa.2017.09.030}
  {\path{doi:10.1016/j.laa.2017.09.030}}.

\bibitem{TuRLetal14}
J.~H. Tu, C.~W. Rowley, D.~M. Luchtenburg, S.~L. Brunton, and J.~N. Kutz.
\newblock On dynamic mode decomposition: {T}heory and applications.
\newblock {\em J. Comput. Dyn.}, 1(2):391--421, 2014.
\newblock \href {https://doi.org/10.3934/jcd.2014.1.391}
  {\path{doi:10.3934/jcd.2014.1.391}}.

\bibitem{TuBPetal17}
S.~Tu, R.~Boczar, A.~Packard, and B.~Recht.
\newblock Non-asymptotic analysis of robust control from coarse-grained
  identification.
\newblock e-print 1707.04791, arXiv, 2017.
\newblock Optimization and Control (math.OC).
\newblock URL: \url{https://arxiv.org/abs/1707.04791}.

\bibitem{VanD96}
P.~Van~Overschee and B.~De~Moor.
\newblock {\em Subspace Identification for Linear Systems: {T}heory,
  Implementation, Applications}.
\newblock Springer, Boston, MA, 1996.
\newblock \href {https://doi.org/10.1007/978-1-4613-0465-4}
  {\path{doi:10.1007/978-1-4613-0465-4}}.

\bibitem{VanETetal20}
H.~J. Van~Waarde, J.~Eising, H.~L. Trentelman, and M.~K. Camlibel.
\newblock Data informativity: {A} new perspective on data-driven analysis and
  control.
\newblock {\em {IEEE} Trans. Autom. Control}, 65(11):4753--4768, 2020.
\newblock \href {https://doi.org/10.1109/TAC.2020.2966717}
  {\path{doi:10.1109/TAC.2020.2966717}}.

\bibitem{Voi15}
M.~Voigt.
\newblock {\em On Linear-Quadratic Optimal Control and Robustness of
  Differential-Algebraic Systems}.
\newblock {D}issertation, Otto-von-Guericke-Universit{\"a}t, Magdeburg,
  Germany, 2015.

\bibitem{supWer22}
S.~W.~R. Werner.
\newblock Code, data and results for numerical experiments in ``{O}n the sample
  complexity of stabilizing linear dynamical systems from data'' (version 1.0),
  February 2022.
\newblock \href {https://doi.org/10.5281/zenodo.5902997}
  {\path{doi:10.5281/zenodo.5902997}}.

\bibitem{WilRMetal05}
J.~C. Willems, P.~Rapisarda, I.~Markovsky, and B.~L.~M. De~Moor.
\newblock A note on persistency of excitation.
\newblock {\em Syst. Control Lett.}, 54(4):325--329, 2005.
\newblock \href {https://doi.org/10.1016/j.sysconle.2004.09.003}
  {\path{doi:10.1016/j.sysconle.2004.09.003}}.

\bibitem{ZieN42}
J.~Ziegler and N.~Nichols.
\newblock Optimum settings for automatic controllers.
\newblock {\em Trans. ASME}, 64:759--768, 1942.

\end{thebibliography}

\end{document}